\def\st{\, |\,}
\def\R{\mathbb{R}}
\def\Z{\mathbb{Z}}
\def\Q{\mathbb{Q}}
\DeclareMathOperator{\im}{im}
\DeclareMathOperator{\Hom}{Hom}
\DeclareMathOperator{\Ext}{Ext}
\DeclareMathOperator{\boundary}{\partial}
\newcommand{\rltvbndry}[3]{\operatorname{\boundary}_{#1}^{\, (#2,#3)}}
\newcommand{\abs}[1]{\lvert#1\rvert}
\newcommand{\norm}[1]{\lVert#1\rVert}
\def\p{\phantom{-}}
\newtheorem{theorem}{Theorem}[section]
\newtheorem*{theorem*}{Theorem}
\newtheorem*{proposition*}{Proposition}
\newtheorem{lemma}[theorem]{Lemma}
\newtheorem*{lemma*}{Lemma}
\newtheorem{claim}[theorem]{Claim}
\newtheorem*{claim*}{Claim}
\newtheorem*{axiom*}{Axiom}
\newtheorem*{conjecture*}{Conjecture}
\newtheorem{corollary}[theorem]{Corollary}
\newtheorem*{corollary*}{Corollary}
\theoremstyle{definition}
\newtheorem{definition}[theorem]{Definition}
\newtheorem*{definition*}{Definition}
\newtheorem*{example*}{Example}
\newtheorem*{exercise*}{Exercise}
\newtheorem*{recall*}{Recall}
\theoremstyle{remark}
\newtheorem*{note*}{Note}
\newtheorem{remark}[theorem]{Remark}
\newtheorem*{remark*}{Remark}
\newtheorem*{notation*}{Notation}
\newtheorem*{question*}{Question}
\newtheorem*{fact*}{Fact}
\newcommand{\bb}{\mathbf{b}}
\newcommand{\cc}{\mathbf{c}}
\newcommand{\ff}{\mathbf{f}}
\newcommand{\uu}{\mathbf{u}}
\newcommand{\vv}{\mathbf{v}}
\newcommand{\xx}{\mathbf{x}}
\newcommand{\yy}{\mathbf{y}}
\newcommand{\zz}{\mathbf{z}}
\definecolor{darkgrn}{rgb}{0, 0.75, 0}
\begin{document}

\title{Optimal Homologous Cycles, Total Unimodularity, \\
  and Linear Programming\thanks{A preliminary version
of this paper appeared in the Proceedings of 42nd ACM
Symposium on Theory of Computing, 2010.}}

\author{
Tamal K.~Dey\thanks{
Department of Computer Science and Engineering,
The Ohio State University, Columbus, OH 43210, USA.
\href{mailto:tamaldey@cse.ohio-state.edu}{\tt tamaldey@cse.ohio-state.edu},
\quad\url{http://www.cse.ohio-state.edu/\string~tamaldey}}
\quad\quad
Anil N.~Hirani\thanks{
Department of Computer Science,
University of Illinois at Urbana-Champaign, IL 61801, USA.
\href{mailto:hirani@cs.illinois.edu}{\tt hirani@cs.illinois.edu},
\quad\url{http://www.cs.illinois.edu/hirani}}
\quad\quad Bala Krishnamoorthy\thanks{
Department of Mathematics,
Washington State University, Pullman, WA 99164, USA.
\href{mailto:bkrishna@math.wsu.edu}{\tt bkrishna@math.wsu.edu},
\quad\url{http://www.math.wsu.edu/math/faculty/bkrishna}}
}

\date{}
\maketitle

\begin{abstract}
  Given a simplicial complex with weights on its simplices, and a
  nontrivial cycle on it, we are interested in finding the cycle with
  minimal weight which is homologous to the given one.
  Assuming that the homology is defined with integer ($\Z$)
  coefficients, we show the following
  (Theorem~\ref{thm:TUiff_trsnfr}):  
  \begin{quote}
    \emph{For a finite simplicial complex $K$ of dimension greater than
      $p$, the boundary matrix $[\boundary_{p+1}]$ is totally unimodular if
      and only if $H_p(L, L_0)$ is torsion-free, for all pure
      subcomplexes $L_0, L$ in $K$ of dimensions $p$ and $p+1$
      respectively, where $L_0 \subset L$.}
  \end{quote}
  Because of the total unimodularity of the boundary matrix, we can
  solve the optimization problem, which is inherently an integer
  programming problem, as a linear program and obtain an integer
  solution. Thus the problem of finding optimal cycles in a given
  homology class can be solved in polynomial time. This result is
  surprising in the backdrop of a recent result which says that the
  problem is NP-hard under $\Z_2$ coefficients which, being a
  \emph{field}, is in general easier to deal with. Our result implies,
  among other things, that one can compute in polynomial time an
  optimal $(d-1)$-cycle in a given homology class for any
  triangulation of an orientable compact $d$-manifold or for any
  finite simplicial complex embedded in $\R^d$. Our optimization
  approach can also be used for various related problems, such as
  finding an optimal \emph{chain} homologous to a given one when these
  are not cycles. Our result can also be viewed as providing a
  topological characterization of total unimodularity.
\end{abstract}

\section{Introduction} \label{sec:introduction} 

Topological cycles in shapes embody their important features. As a
result they find applications in scientific studies and engineering
developments. A version of the problem that often appears in practice
is that given a cycle in the shape, compute the shortest cycle in the
same topological class (homologous). For example, one may generate a
set of cycles from a simplicial complex using the persistence
algorithm~\cite{EdLeZo2002} and then ask to tighten them while
maintaining their homology classes. For two dimensional surfaces, this
problem and its relatives have been widely studied in recent years;
see, for
example,~\cite{ChVeErLaWh2008,ChErNa2009,ChFr2010,VeEr2006,DeLiSuCo2008}.
A natural question is to consider higher dimensional spaces which
allow higher dimensional cycles such as closed surfaces within a three
dimensional topological space. High dimensional applications
arise, for example, in the modeling of sensor networks
by Vietoris-Rips complexes of arbitrary dimension \cite{SiGh2007,
  TaJa2009}. Not surprisingly, these generalizations are hard to
compute which is confirmed by a recent result of Chen and Freedman
~\cite{ChFr2010a}. Notwithstanding this negative development, our
result shows that optimal homologous cycles in any finite dimension
are polynomial time computable for a large class of shapes if
homology is defined with integer coefficients.

Let $K$ be a simplicial complex. Informally, a $p$-cycle in $K$ is a
collection of $p$-simplices whose boundaries cancel mutually. One may
assign a non-zero weight to each $p$-simplex in $K$ which induces a
weighted 1-norm for each $p$-cycle in $K$. For example, the weight of
a $p$-simplex could be its volume. Given any $p$-cycle $c$ in $K$, our
problem is to compute a $p$-cycle $c^\ast$ which has the minimal
weighted 1-norm in the homology class of $c$. If some of the weights
are zero the problem can still be posed and solved, except that one may
not call it weighted 1-norm minimization. The homology classes are
defined with respect to coefficients in an abelian group such as $\Q$,
$\R$, $\Z$, $\Z_n$ etc. Often, the group $\Z_2$ is used mainly because
of simplicity and intuitive geometric interpretations.

Chen and Freedman~\cite{ChFr2010a} show that under $\Z_2$
coefficients, computing an optimal $p$-cycle $c^\ast$ is NP-hard for
$p\geq 1$. Moreover, their result implies that various relaxations may
still be NP-hard. For example, computing a constant factor
approximation of $c^\ast$ is NP-hard. Even if the rank of the
$p$-dimensional homology group is constant, computing $c^\ast$ remains
NP-hard for $p\geq 2$. The only settled positive case is a result of
Chambers, Erickson, and Nayyeri~\cite{ChErNa2009} who show that
computing optimal homologous loops for surfaces with constant genus is
polynomial time solvable though they prove the problem is NP-hard if
the genus is not a constant.

The above negative results put a roadblock in computing optimal
homologous cycles in high dimensions. Fortunately, our result shows
that it is not so hopeless -- if we switch to the coefficient group
$\Z$ instead of $\Z_2$, the problem becomes polynomial time solvable
for a fairly large class of spaces. This is a little surprising given
that $\Z$ is not a \emph{field} and so seems harder to deal with than
$\Z_2$ in general. For example, $\Z_2$-valued chains form a vector
space, but $\Z$-valued chains do not.

The problem of computing an optimal homologous cycle (or more
generally, chain) can be cast as a linear optimization problem.
Consequently, the problem becomes polynomial time solvable if the
homology group is defined over the reals, since it can be solved by
linear programming. Indeed this is the approach taken by Tahbaz-Salehi
and Jadbabaie~\cite{TaJa2009}. However, in general the optimal cycle
in that case may have fractional coefficients for its simplices, which
may be awkward in certain applications. One advantage of using $\Z$ is
that simplices appear with integral coefficients in the solution.  On
the other hand, the linear programming has to be replaced by integer
programming in the case of $\Z$.  Thus, it is not immediately clear if
the optimization problem is polynomial time solvable. One issue
in accommodating $\Z$ as the coefficient ring is that
integral coefficients other than $0$, $1$, or $-1$ do not have natural
geometric meaning.  Nevertheless, our experiments suggest that optimal
solutions in practice may contain coefficients only in $\{-1,0,1\}$.
Furthermore, as we show later, we can put a constraint in our
optimization to enforce the solution to have coefficients
in $\{-1,0,1\}$.

Our main observation is that the optimization problem that we
formulate can be solved by linear programming under certain
conditions, although it is inherently an integer programming problem.
It is known that a linear program provides an integer solution if and
only if the constraint matrix has a property called \emph{total
  unimodularity}. A matrix is totally unimodular if and only if each
of its square submatrices has a determinant of $0$, $1$, or $-1$. We
give a precise topological characterization of the complexes for which
the constraint matrix is totally unimodular. For this class of
complexes the optimal cycle can be computed in time polynomial in the
number of simplices. 
Totally unimodular matrices have a well-known geometric
characterization -- that the corresponding constraint polyhedron is
integral~\cite[Theorem 19.1]{Schrijver1986}. Our result provides a
topological characterization as well.

We can allow several variations to our problem because of our
optimization based approach. For example, we can probe into
intermediate solutions; we can produce the chain that bounds the
difference of the input and optimal cycles, and so forth. In fact, we
can also find an optimal chain homologous to a given one when the
chains are not cycles. In other words, we can leverage the flexibility
of the optimization formulation by linking results from two apparently
different fields, optimization theory and algebraic topology.

\section{Background}

Since our result bridges the two very different fields of algebraic
topology and optimization, we recall some relevant basic concepts and
definitions from these two fields.

\subsection{Basic definitions from algebraic topology}
Let $K$ be a finite simplicial complex of dimension greater than $p$.
A $p$-chain with $\Z$ coefficients in $K$ is a \emph{formal sum} of a
set of oriented $p$-simplices in $K$ where the sum is defined by
addition in $\Z$. Equivalently, it is an integer valued function on
the oriented $p$-simplices, which changes sign when the orientation is
reversed \cite[page 37]{Munkres1984}.

Two $p$-chains can be added by adding their values on corresponding
$p$-simplices, resulting in a group $C_p(K)$ called the $p$-chain
group of $K$. The \emph{elementary chain basis} for $C_p(K)$ is the
one consisting of integer valued functions that take the value 1 on a
single oriented $p$-simplex, $-1$ on the oppositely oriented simplex,
and 0 everywhere else. For an oriented $p$-simplex $\sigma$, we use
$\sigma$ to denote both the simplex and the corresponding elementary
chain basis element. The group $C_p(K)$ is free and abelian.  The
boundary of an oriented $p$-simplex $\sigma=[v_0,\ldots,v_p]$ is given
by
\[ \boundary_p \sigma = \sum_{i=0}^p
(-1)^i[v_0,..,\widehat{v_i},..,v_p]\, , 
\] 
where $\widehat{v}_i$ denotes that the vertex $v_i$ is to be deleted.
This function on $p$-simplices extends uniquely \cite[page
28]{Munkres1984} to the \emph{boundary operator} which is a
homomorphism: \[ \boundary_p\colon C_p(K)\rightarrow C_{p-1}(K)\, . \]

Like a linear operator between vector spaces, a homomorphism between
free abelian groups has a unique matrix representation with respect to
a choice of bases \cite[page~55]{Munkres1984}. The matrix form of
$\boundary_p$ will be denoted $[\boundary_p]$. Let
$\{\sigma_i\}_{i=0}^{m-1}$ and $\{\tau_j\}_{j=0}^{n-1}$ be the sets of
oriented $(p-1)$- and $p$-simplices respectively in $K$, ordered
arbitrarily. Thus $\{\sigma_i\}$ and $\{\tau_j\}$ also represent the
elementary chain bases for $C_{p-1}(K)$ and $C_p(K)$ respectively.
With respect to such bases $[\boundary_p]$ is an $m \times n$ matrix
with entries 0, 1 or $-1$. The coefficients of $\boundary_p \tau_j$ in
the $C_{p-1}(K)$ basis become the column $j$ (counting from 0) of
$[\boundary_p]$.

The kernel $\ker\boundary_p$ is called the group of $p$-{\em cycles}
and denoted $Z_p(K)$. The image $\im\boundary_{p+1}$ forms the group
of $p$-{\em boundaries} and denoted $B_p(K)$. Both $Z_p(K)$ and
$B_p(K)$ are subgroups of $C_p(K)$. Since $\boundary_p\circ
\boundary_{p+1} =0$, we have that $B_p(K) \subseteq Z_p(K)$, that is,
all $p$-boundaries are $p$-cycles though the converse is not
necessarily true. The $p$ dimensional homology group is the quotient
group $H_p(K)=Z_p(K)/B_p(K)$. Two $p$-chains $c$ and $c'$ in $K$ are
\emph{homologous} if $c=c'+\boundary_{p+1} d$ for some $(p+1)$-chain
$d$ in $K$. In particular, if $c=\boundary_{p+1} d$, we say $c$ is
homologous to zero. If a cycle $c$ is not homologous to zero, we call
it a \emph{non-trivial cycle}.

For a finite simplicial complex $K$, the groups of chains $C_p(K)$,
cycles $Z_p(K)$, and $H_p(K)$ are all finitely generated abelian
groups. By the fundamental theorem of finitely generated abelian
groups \cite[page 24]{Munkres1984} any such group $G$ can be written
as a direct sum of two groups $G=F\oplus T$ where $F\cong (\Z
\oplus\cdots\oplus \Z)$ and $T\cong(\Z/t_1\oplus\cdots\oplus \Z/t_k)$
with $t_i>1$ and $t_i$ dividing $t_{i+1}$. The subgroup $T$ is called
the \emph{torsion} of $G$. If $T=0$, we say $G$ is
\emph{torsion-free}.

Let $L_0$ be a subcomplex of a simplicial complex $L$. The quotient
group $C_p(L)/C_p(L_0)$ is called the group of \emph{relative chains}
of $L$ modulo $L_0$ and is denoted $C_p(L,L_0)$. The boundary operator
$\boundary_p\colon C_p(L)\rightarrow C_{p-1}(L)$ and its restriction
to $L_0$ induce a homomorphism
\[
\rltvbndry{p}{L}{L_0}\colon C_p(L,L_0)\rightarrow C_{p-1}(L,L_0)\, .
\]
As before, we have
$\rltvbndry{p}{L}{L_0}\circ\rltvbndry{p+1}{L}{L_0}=0$. Writing
$Z_p(L,L_0)={\rm ker}\rltvbndry{p}{L}{L_0}$ for \emph{relative cycles}
and $B_p(L,L_0)={\rm im}\rltvbndry{p+1}{L}{L_0}$ for
\emph{relative boundaries}, we obtain the \emph{relative homology
group} $H_p(L,L_0)=Z_p(L,L_0)/B_p(L,L_0)$. Sometimes, to distinguish
it from relative homology, the usual homology $H_p(L)$ is called the
{\em absolute homology group} of $L$.

\subsection{Total unimodularity and optimization}

Recall that a matrix is \emph{totally unimodular} if the determinant
of each square submatrix is $0$, $1$, or~$-1$. The significance of
total unimodularity in our setting is due to the following result:

\begin{theorem}{\rm \cite{VeDa1968}}\label{thm:polyhedron}
  Let $A$ be an $m\times n$ totally unimodular matrix and $\bb$ an
  integral vector, i.e., $\bb \in \Z^m$. Then the polyhedron
  $\mathcal{P} := \{\xx \in \R^n\;\st\; A\xx = \bb, \;
  \xx \ge \mathbf{0}\}$ is integral, meaning
  that $\mathcal{P}$ is the convex hull of the integral vectors
  contained in $\mathcal{P}$. In particular, the extreme points
  (vertices) of $\mathcal{P}$ are integral. Similarly the polyhedron
  $\mathcal{Q} := \{\xx \in \R^n\;\st\; A\xx \ge \bb\}$ is integral.

\end{theorem}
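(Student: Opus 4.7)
My plan is to prove the integrality of $\mathcal{P}$ by showing directly that every extreme point is integral, and then to reduce the case of $\mathcal{Q}$ to that of $\mathcal{P}$ via a standard equality-form conversion. Any extreme point $\xx^\ast$ of $\mathcal{P}$ is a basic feasible solution: there is an index set $B$ such that the submatrix $A_B$ formed by the columns of $A$ indexed by $B$ is square and nonsingular, $\xx^\ast_j = 0$ for $j \notin B$, and $\xx^\ast_B = A_B^{-1}\bb$. The heart of the argument is to show that $A_B^{-1}$ is an integer matrix. Since $A_B$ is a nonsingular square submatrix of the totally unimodular matrix $A$, $\det(A_B) \in \{-1,+1\}$. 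By Cramer's rule, each entry of $A_B^{-1}$ equals, up to sign, the determinant of a submatrix of $A_B$ divided by $\det(A_B) = \pm 1$; each such subdeterminant is itself a submatrix determinant of $A$ and hence lies in $\{-1,0,+1\}$. Thus $A_B^{-1}$ has integer entries and $\xx^\ast_B = A_B^{-1}\bb$ is integer.

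To upgrade ``every vertex is integer'' to ``$\mathcal{P}$ is integral,'' I would invoke the Minkowski--Weyl theorem. Since $\mathcal{P}$ lies in the nonnegative orthant it is pointed, so $\mathcal{P}$ equals the sum of the convex hull of its vertices and the conic hull of its extreme rays. Each extreme ray is generated by a basic feasible direction of the homogenized system $\{A\xx = \mathbf{0}, \; \xx \ge \mathbf{0}\}$, and the identical Cramer's-rule calculation produces an integer generator. Hence $\mathcal{P}$ is the convex hull of the integer vectors it contains, which is the definition of integrality.

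For $\mathcal{Q} = \{\xx : A\xx \ge \bb\}$, I would write $\xx = \xx^+ - \xx^-$ with $\xx^\pm \ge \mathbf{0}$ and introduce a slack $\mathbf{s} \ge \mathbf{0}$, so that $A\xx \ge \bb$ is equivalent to $[\,A \; -A \; -I\,](\xx^+, \xx^-, \mathbf{s})^\top = \bb$ in nonnegative variables. The enlarged constraint matrix $[A \; -A \; -I]$ remains totally unimodular: negating a column of a TU matrix changes every subdeterminant by at most a sign, while appending a column of $-I$ only multiplies each subdeterminant by $\pm 1$ (via cofactor expansion along that column) before reducing to a subdeterminant of $[A \; -A]$. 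Applying the first part of the theorem to this enlarged polyhedron $\mathcal{Q}'$ shows that $\mathcal{Q}'$ is integral, and the integer linear projection $(\xx^+, \xx^-, \mathbf{s}) \mapsto \xx^+ - \xx^-$ sends $\mathcal{Q}'$ onto $\mathcal{Q}$ while mapping integer vertices and integer rays to integer vertices and integer rays, transferring integrality to $\mathcal{Q}$. The only genuinely nontrivial step is the Cramer's-rule computation showing integer invertibility of $A_B$; the remaining pieces are standard linear-programming bookkeeping.
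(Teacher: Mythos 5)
The paper does not prove this statement; it is cited to Veinott--Dantzig (the result also appears in Hoffman--Kruskal and in Schrijver's book as Theorem 19.1), so there is no in-paper argument to compare against. Your proof is the standard textbook argument and is essentially correct: Cramer's rule on a basis submatrix $A_B$ with $\abs{\det A_B}=1$ forces integral vertices, and the equality-form reduction handles $\mathcal{Q}$.

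Two places you should tighten if this were to be written out in full. First, when $\operatorname{rank}(A)<m$ there is no $m\times m$ nonsingular column submatrix; a basic feasible solution instead corresponds to a choice of $r=\operatorname{rank}(A)$ columns \emph{and} $r$ rows forming a nonsingular $r\times r$ block, and the Cramer computation is applied to that block (which is still a square submatrix of $A$, so still has determinant $\pm1$). Second, the passage from ``all vertices and extreme-ray generators are integral'' to ``$\mathcal{P}=\operatorname{conv}(\mathcal{P}\cap\Z^n)$'' deserves a sentence: either cite the fact that a rational pointed polyhedron is integral iff every vertex (= every minimal face) is integral, or observe directly that any $\sum_i\lambda_i v_i+\sum_j\mu_j r_j$ is a convex combination of the integer points $v_i$ and $v_i+N r_j$ for a sufficiently large integer $N$. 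For $\mathcal{Q}$, the cleanest way to phrase the transfer is not via ``vertices map to vertices'' (they need not), but via the identity $\pi(\operatorname{conv}(S))=\operatorname{conv}(\pi(S))$ for an integer linear map $\pi$: since $\pi(\mathcal{Q}'\cap\Z^N)\subseteq\mathcal{Q}\cap\Z^n$ and $\pi$ is onto $\mathcal{Q}$, integrality of $\mathcal{Q}'$ gives $\mathcal{Q}=\pi(\mathcal{Q}')=\operatorname{conv}(\pi(\mathcal{Q}'\cap\Z^N))\subseteq\operatorname{conv}(\mathcal{Q}\cap\Z^n)\subseteq\mathcal{Q}$. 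These are bookkeeping issues, not gaps; the core idea is right.
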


The following corollary shows why the above result is significant for
optimization problems. Consider an integral vector $\bb \in \Z^m$ and
a real vector of cost coefficients $\ff \in \R^n$. Consider the
\emph{integer} linear program%
\begin{equation}\label{opt:basic_ILP} \min
  \; \ff^T \xx \quad
  \text{subject to} \quad A \xx = \bb, \; \xx \ge \mathbf{0} \text{
    and } \xx \in \Z^n\, .
\end{equation}

\begin{corollary}\label{cor:TU_ILP}
  Let $A$ be a totally unimodular matrix. Then the integer linear
  program~\eqref{opt:basic_ILP} can be solved in time polynomial in
the dimensions of $A$. 
\end{corollary}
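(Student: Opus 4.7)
The plan is to reduce the integer program~\eqref{opt:basic_ILP} to its linear programming (LP) relaxation, invoke the integrality of the feasible polyhedron guaranteed by Theorem~\ref{thm:polyhedron}, and appeal to a polynomial-time LP algorithm. Concretely, I would drop the constraint $\xx \in \Z^n$ and consider the LP
\[
\min\; \ff^T\xx \quad\text{subject to}\quad A\xx = \bb,\ \xx \ge \mathbf{0}.
\]
Any feasible integer solution of~\eqref{opt:basic_ILP} is feasible for this LP, and conversely any integer feasible point of the LP is feasible for the IP, with the same objective value. Hence the optimal value of the LP is a lower bound on the optimal value of the IP, and the two coincide provided the LP attains its optimum at an integer point.

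Next I would argue that an integer optimum of the LP exists and can be extracted. If the LP is infeasible or unbounded, the IP has the same status and the algorithm reports that. Otherwise, since the feasible polyhedron $\mathcal{P}=\{\xx\in\R^n : A\xx=\bb,\ \xx\ge\mathbf{0}\}$ contains no line (the nonnegativity constraints preclude this), the optimum is attained at a vertex of $\mathcal{P}$. By Theorem~\ref{thm:polyhedron}, every vertex of $\mathcal{P}$ is integral, so any vertex optimum is automatically feasible for~\eqref{opt:basic_ILP} and hence optimal for it.

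To make this constructive and polynomial, I would invoke one of the known weakly polynomial LP algorithms---Khachiyan's ellipsoid method or Karmarkar's interior point method---which solves the LP relaxation in time polynomial in the bit-size of $A$, $\bb$, and $\ff$. Since $A$ is totally unimodular, its entries lie in $\{-1,0,1\}$, so the bit-size is polynomial in the dimensions of $A$ (together with the bit-size of $\bb$ and $\ff$, which we treat as part of the input). These algorithms can be followed by a standard basis-crashing (purification) step that converts any optimal solution into an optimal vertex of $\mathcal{P}$ in polynomial time, yielding the required integer optimum.

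The main subtlety, rather than a real obstacle, is the possibility that the LP solver returns a non-vertex optimum (for instance, in a degenerate instance with multiple optima of which some are fractional); this is handled by the purification step, which walks along the optimal face by successively tightening linearly independent constraints until a vertex is reached, each step solvable in polynomial time. With this in place, combining the polynomial-time LP solution with the integrality statement of Theorem~\ref{thm:polyhedron} completes the proof.
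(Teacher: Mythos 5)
Your proof takes essentially the same route as the paper's: relax~\eqref{opt:basic_ILP} to a linear program, observe that a pointed polyhedron (no line, because $\xx\ge\mathbf{0}$) with finite optimal cost has a vertex optimum, invoke Theorem~\ref{thm:polyhedron} for integrality of that vertex, and run a polynomial-time LP solver followed by a purification step to move from an arbitrary LP optimum to an optimal vertex. The paper references~\cite{BeTs1997} for the existence of a vertex and a vertex optimum, and~\cite{GuHeRoTeTs1993} for the crossover/purification step, whereas you argue these facts from first principles, but the structure is identical.

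One small mismatch worth flagging: the corollary asserts running time polynomial in the \emph{dimensions} of $A$, and you appeal to Khachiyan or Karmarkar, which are weakly polynomial --- their operation count depends on the bit-size of $\bb$ and $\ff$ as well. You acknowledge this parenthetically and fold $\bb,\ff$ into ``the input,'' which proves polynomiality in the input size but does not literally yield the stated bound. The paper closes exactly this gap by observing that total unimodularity forces the entries of $A$ to lie in $\{-1,0,1\}$ and then citing Tardos~\cite{Tardos1985,Tardos1986}, whose LP algorithm is strongly polynomial: the number of arithmetic operations depends only on the dimensions of $A$, not on $\bb$ or $\ff$. If you want the conclusion as stated rather than the weaker ``polynomial in the input size,'' you should swap your LP oracle for Tardos's (or note that for the purposes of Theorem~\ref{thm:TU_plynmltm} and its applications, weak polynomiality already suffices).
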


\begin{proof}
  Relax the integer linear program~\eqref{opt:basic_ILP} to a linear
  program by removing the integrality constraint $\xx \in \Z^n$. Then
  an interior point method for solving linear programs will find a
  real solution $\xx^\ast$ in polynomial time \cite{Schrijver1986} if
  it exists, and indicates the unboundedness or infeasibility of the
  linear program otherwise. In fact, since the matrix $A$ has entries
  0, 1 or $-1$, one can solve the linear program in strongly
  polynomial time \cite{Tardos1985, Tardos1986}. That is, the number
  of arithmetic operations do not depend on $\bb$ and $\ff$ and solely
  depends on the dimension of $A$. One still needs to show that the
  solution $\xx^\ast$ is integral.

  If the solution is unique then it lies at a vertex of the polyhedron
  $\mathcal{P}$ and thus it will be integral because of
  Theorem~\ref{thm:polyhedron}. If the optimal solution set is a face
  of $\mathcal{P}$ which is not a vertex then an interior point method
  may at first find a non-integral solution. However,
  by~\cite[Corollary 2.2]{BeTs1997} the polyhedron $\mathcal{P}$ must
  have at least one vertex. Then, by~\cite[Theorem 2.8]{BeTs1997} if
  the optimal cost is finite, there exists a vertex of $\mathcal{P}$
  where that optimal cost is achieved. Following the procedure
  described in~\cite{GuHeRoTeTs1993}, starting from the possibly
  non-integral solution obtained by an interior point method one can
  find such an integral optimal solution at a vertex in polynomial
  time.
\end{proof}

\section{Problem formulation}

Let $K$ be a finite simplicial complex of dimension $p$ or more. Given
an integer valued $p$-chain $x = \sum_{i=0}^{m-1} x(\sigma_i)\,
\sigma_i$ we use $\xx \in \Z^m$ to denote the vector formed by the
coefficients $x(\sigma_i)$. Thus, $\xx$ is the representation of the
chain $x$ in the elementary $p$-chain basis, and we will use $\xx$ and
$x$ interchangeably. For a vector $\vv \in \R^m$, the \emph{1-norm}
(or $\ell^1$-norm) $\norm{\vv}_1$ is defined to be $\sum_i \,
\abs{v_i}$.  Let $W$ be any real $m \times m$ diagonal matrix with
diagonal entries $w_i$. Then, the 1-norm of $W\, \vv$, that is,
$\norm{W\, \vv}_1$ is $\sum_i \, \abs{w_i}\abs{v_i}$. (If $W$ is a
general $m \times m$ nonsingular matrix then $\norm{W\, \vv}_1$ is
called the \emph{weighted 1-norm} of $\vv$.) The norm or weighted norm
of an integral vector $\vv \in \Z^m$ is defined by considering $\vv$
to be in $\R^m$. We now state in words the problem of optimal
homologous chains and later formalize it in~\eqref{opt:OHCP}:%
\begin{quote}
  Given a $p$-chain $\cc$ in $K$ and a diagonal matrix $W$ of
  appropriate dimension, the optimal homologous chain problem (OHCP)
  is to find a chain $\cc^\ast$ which has the minimal 1-norm $\norm{W
    \cc^\ast}_1$ among all chains homologous to~$\cc$.
\end{quote}

\begin{remark} 
  In the natural case where simplices are weighted and the optimality
  of the chains is to be determined with respect to these weights, we
  may take $W$ to be diagonal with $w_i$ being the weight of simplex
  $\sigma_i$. In our formulation some of the weights can be 0. Notice
  that the signs of the simplex weights are ignored in our formulation
  since we only work with norms.
\end{remark}

\begin{remark} 

  In Section~\ref{sec:introduction} we surveyed the computational
  topology literature on the problem of finding optimal homologous
  \emph{cycles}. The flexibility of our formulation allows us to solve
  the more general, optimal homologous \emph{chain} problem, with the
  cycle case being a special case requiring no modification in the
  equations, algorithm, or theorems.%
 \end{remark}

\begin{remark}
  The choice of 1-norm is important. At first, it might seem easier to
  pose OHCP using 2-norm. Then, calculus can be used to pose
    the minimization as a stationary point problem when OHCP is
    formulated with only equality constraints which appear in~\eqref{opt:OHCP}
    below. This case can be solved as a linear system of equations.
  By using 1-norm instead of 2-norm, we have to solve a linear program
  (as we will show below) instead of a linear system. But in return,
  we are able to get integer valued solutions when the appropriate
  conditions are satisfied.
\end{remark}

The formulation of OHCP is the \emph{weighted $\ell^1$-optimization}
of homologous chains. This is very general and allows for different
types of optimality to be achieved by choosing different weight
matrices.  For example, assume that the simplicial complex $K$ of
dimension greater than $p$ is embedded in $\R^d$, where $d \ge
p+1$. Let $W$ be a diagonal matrix with the $i$-th diagonal entry
being the Euclidean $p$-dimensional volume of a $p$-simplex. This
specializes the problem to the \emph{Euclidean $\ell^1$-optimization}
problem. The resulting optimal chain has the smallest $p$-dimensional
volume amongst all chains homologous to the given one. If $W$ is taken
to be the identity matrix, with appropriate additional conditions to
the above formulation, one can solve the \emph{$\ell^0$-optimization}
problem.  The resulting optimal solution has the smallest
\emph{number} of $p$-simplices amongst all chains homologous to $\cc$,
as we show in Section~\ref{subsec:L0}.

\emph{The central idea of this paper consists of the following steps}:
\begin{inparaenum}[(i)] \item write OHCP as an integer program
  involving 1-norm minimization, subject to linear constraints; \item
  convert the integer program into an integer \emph{linear} program by
  converting the 1-norm cost function to a linear one using the
  standard technique of introducing some extra variables and
  constraints; \item find the conditions under which the constraint
  matrix of the integer linear program is totally unimodular; and
  \item for this class of problems, relax the integer linear program
  to a linear program by dropping the constraint that the variables be
  integral. The resulting optimal chain obtained by solving the linear
  program will be an integer valued chain homologous to the given
  chain.
\end{inparaenum}
\subsection{Optimal homologous chains and linear programming}
Now we formally pose OHCP as an optimization problem. After showing
existence of solutions we reformulate the optimization problem as an
integer linear program and eventually as a linear program.

Assume that the number of $p$- and $(p+1)$-simplices in $K$ is $m$ and
$n$ respectively, and let $W$ be a diagonal $m \times m$ matrix.
Given an integer valued $p$-chain $\cc$ the optimal homologous chain
problem is to solve:%
\begin{equation} \label{opt:OHCP}
  \boxed{\underset{\xx,\, \yy}{\min} \, \norm{W\,\xx}_1 \quad
  \text{such that}\quad \xx = \cc + [\boundary_{p+1}] \; \yy, \text{
    and } \xx \in \Z^m, \;\yy \in \Z^n\, .}
\end{equation}

In the problem formulation~\eqref{opt:OHCP} we have given no
indication of the algorithm that will be used to solve the problem.
Before we develop the computational side, it is important to show that
a solution to this problem always exists.
\begin{claim} \label{clm:existence} For any given $p$-chain $\cc$ and
  any matrix $W$, the solution to problem~\eqref{opt:OHCP} exists.
\end{claim}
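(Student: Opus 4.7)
The plan is to establish existence by showing first that the feasible set is non-empty, then that the objective is bounded below, and finally that the infimum is attained by a discreteness argument that exploits the integrality constraint.

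First, I would observe that the pair $(\xx, \yy) = (\cc, \mathbf{0})$ is feasible, since $\cc = \cc + [\boundary_{p+1}]\mathbf{0}$ with $\cc \in \Z^m$ and $\mathbf{0} \in \Z^n$. In particular, the optimal value is at most $M := \norm{W\,\cc}_1$, and the objective $\norm{W\,\xx}_1$ is bounded below by $0$. So the infimum is some finite number in $[0, M]$; what remains is to show that it is actually attained.

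The main subtlety is that some diagonal entries $w_i$ may be zero, so the condition $\norm{W\,\xx}_1 \le M$ does not directly bound every coordinate of $\xx$. To handle this, let $I = \{i : w_i \ne 0\}$. For any feasible $\xx$ with $\norm{W\,\xx}_1 \le M$, we have $\abs{x_i} \le M/\abs{w_i}$ for each $i \in I$, so the projection $\xx|_I$ lies in a finite subset of $\Z^{|I|}$. Moreover, the objective $\norm{W\,\xx}_1$ depends only on $\xx|_I$. Hence the set of objective values realized by feasible points with cost at most $M$ is a finite subset of $\R_{\ge 0}$, and its minimum is attained by some feasible $\xx^\ast$; any $\yy^\ast \in \Z^n$ witnessing the homology, $\xx^\ast = \cc + [\boundary_{p+1}]\,\yy^\ast$, exists by feasibility.

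The only step that needs some care is the discreteness argument when $I \ne \{0,\dots,m-1\}$, but this reduces to noting that the projection of the coset $\cc + \im [\boundary_{p+1}]$ onto the coordinates indexed by $I$ is a coset of a subgroup of $\Z^{|I|}$, and that its intersection with a bounded box is finite. I expect no technical obstacle beyond this bookkeeping; the proof is essentially a finiteness argument made possible by the integrality of the variables.
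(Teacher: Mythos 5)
Your proof is correct and follows essentially the same route as the paper's: restrict attention to the sublevel set of objective values at most $\norm{W\,\cc}_1$ (using the trivially feasible point $\xx = \cc$, $\yy = \mathbf{0}$), then argue that this set of achievable values is finite by integrality, so the infimum is attained. The paper states the finiteness step tersely (``$U'_\cc$ is finite since $\xx$ is integral''), while you make explicit the handling of zero weights $w_i$ by projecting onto the coordinates where $w_i \neq 0$, which is exactly the bookkeeping needed to make that terse statement rigorous.
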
 
\begin{proof}
  Define the set
  \[
  U_\cc := \{ \norm{W\, \xx}_1 \; \st \; \xx = \cc + [\boundary_{p+1}]\;
  \yy, \; \xx \in \Z^m \text{ and } \yy \in \Z^n \}\, .
  \]
  We show that this set has a minimum which is contained in the
  set. Consider the subset $U'_\cc \subseteq U_\cc$ defined by 
  \[
  U'_\cc
  = \{ \norm{W\, \xx}_1 \; \st \; \norm{W\, \xx}_1 \le \norm{W\,
    \cc}_1, \;\xx = \cc + [\boundary_{p+1}]\;
  \yy, \; \xx \in \Z^m \text{ and } \yy \in \Z^n \}\, .
  \]
  This set $U'_\cc$ is finite since $\xx$ is integral. Therefore,
  $\inf U_\cc = \inf U'_\cc = \min U'_\cc$.
\end{proof}

In the rest of this paper we assume that $W$ is a diagonal matrix
obtained from \emph{weights} on simplices as follows. Let $w$ be a
real-valued weight function on the oriented $p$-simplices of $K$ and
let $W$ be the corresponding diagonal matrix (the $i$-th diagonal
entry of $W$ is $w(\sigma_i) = w_i$).

The resulting objective function $\norm{W\, \xx}_1 = \sum_i \,
\abs{w_i} \, \abs{x_i}$ in~\eqref{opt:OHCP} is not linear in $x_i$
because it uses the absolute value of $x_i$. It is however,
piecewise-linear in these variables. As a result,~\eqref{opt:OHCP} can
be reformulated as an integer \emph{linear} program in the following
standard way~\cite[page 18]{BeTs1997}:%
\begin{align} \label{opt:ILP}
  \min \; &
  \sum_i \, \abs{w_i} \, (x_i^+ + x_i^-)\notag\\
  \text{subject to}\quad & \xx^+ - \xx^- = \cc + [\boundary_{p+1}]\;
  \yy\\
  &\xx^+, \; \xx^- \ge \mathbf{0}\notag\\
  &\xx^+, \, \xx^- \in \Z^m, \; \yy \in \Z^n \, . \notag
\end{align} 
  Comparing the above formulation to the standard form integer linear
  program in~\eqref{opt:basic_ILP}, note that the vector $\xx$
  in~\eqref{opt:basic_ILP} corresponds to $[\xx^+,\, \xx^-,\, \yy]^T$
  in~\eqref{opt:ILP} above. Thus the minimization is over $\xx^+$,
  $\xx^-$ and $\yy$, and the coefficients of $x^+_i$ and $x^-_i$ in
  the objective function are $\abs{w_i}$, but the coefficients
  corresponding to $y_j$ are zero. The linear programming relaxation
  of this formulation just removes the constraints about the variables
  being integral. The resulting linear program is:
\begin{align}  \label{opt:LP} 
  \min \;&  \sum_i \, \abs{w_i} \, (x_i^+ + x_i^-)\notag\\
  \text{subject to}\quad &
  \xx^+ - \xx^- = \cc + [\boundary_{p+1}]\; \yy\\
  & \xx^+, \; \xx^- \ge \mathbf{0} \, . \notag
\end{align}%
To use the result about standard form polyhedron in
Theorem~\ref{thm:polyhedron} we can eliminate the free (unrestricted
in sign) variables $\yy$ by replacing these by $\yy^+ - \yy^-$ and
imposing the non-negativity constraints on the new variables
\cite[page 5]{BeTs1997}. The resulting linear program has the same
objective function, and the equality constraints:%
\begin{equation} \label{opt:standard_LP}
  \xx^+ - \xx^- = \cc + [\boundary_{p+1}] \; (\yy^+ - \yy^-)\, ,
\end{equation}
and thus the equality constraint matrix is $\begin{bmatrix} I & -I &
  -B & B \end{bmatrix}$, where $B = [\boundary_{p+1}]$. We now prove a
result about the total unimodularity of this matrix.

\begin{lemma}\label{lem:BimpliesA_TU}
  If $B = [\boundary_{p+1}]$ is totally unimodular then so is the
  matrix $\begin{bmatrix} I & -I & -B & B \end{bmatrix}$.
\end{lemma}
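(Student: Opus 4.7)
The plan is to exploit two standard preservation properties of total unimodularity: negating a column preserves TU (any $k \times k$ minor changes only by a sign, and $\{0, \pm 1\}$ is closed under negation), and duplicating a column preserves TU (any square submatrix that selects both copies of the duplicated column has two equal columns, hence determinant $0$, while any other square submatrix is already a submatrix of the original). Applied to $B = [\boundary_{p+1}]$, these two facts immediately give that $[-B \; B]$ is totally unimodular.

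It then remains to handle the prepended $[I \; -I]$ block. I would take an arbitrary $k \times k$ submatrix $M$ of $\begin{bmatrix} I & -I & -B & B \end{bmatrix}$ and argue that $\det M \in \{0, \pm 1\}$ by repeated cofactor expansion along the columns of $M$ drawn from the $I$ or $-I$ blocks. Each such column, once restricted to the chosen rows, is either zero (forcing $\det M = 0$), or a signed unit vector $\pm e_j$. If two of these restricted columns agree up to sign --- for instance, one from the $I$ block and one from the $-I$ block with the same pivot index, both of whose pivot rows are selected --- then $M$ has two proportional columns and $\det M = 0$. Otherwise, cofactor expansion along each signed unit column in turn deletes its single nonzero row together with the column itself, yielding $\pm \det M'$, where $M'$ is a square submatrix of $[-B \; B]$. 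By the first step, $\det M' \in \{0, \pm 1\}$, so $\det M \in \{0, \pm 1\}$ as required.

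The only subtlety is bookkeeping the cofactor expansions when several unit columns are present at once, but this is entirely mechanical: each expansion strips off one row and one column while multiplying the determinant by $\pm 1$. I do not anticipate any genuine obstacle here; the lemma is a routine block-matrix TU argument, and the substantive topology enters only later, where the TU of $[\boundary_{p+1}]$ is connected to the torsion structure of $H_p(L, L_0)$.
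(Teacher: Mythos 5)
Your proof is correct and takes essentially the same route as the paper: both first observe that $B$ totally unimodular gives $[-B\;\;B]$ totally unimodular, and then prepend the $[I\;\;-I]$ block of signed unit columns. The only difference is that the paper cites the standard list of TU-preserving operations in Schrijver (page 280) for both steps, whereas you re-derive the unit-column step directly via cofactor expansion --- a perfectly sound fill-in of the same argument.
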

\begin{proof}
  The proof uses operations that preserve the total unimodularity of a
  matrix. These are listed in \cite[page 280]{Schrijver1986}. If $B$ is
  totally unimodular then so is the matrix $\begin{bmatrix}-B &
    B\end{bmatrix}$ since scalar multiples of columns of $B$ are being
  appended on the left to get this matrix. The full matrix in
  question can be obtained from this one by appending columns with a
  single $\pm 1$ on the left, which proves the result.
\end{proof}
As a result of Corollary~\ref{cor:TU_ILP} and
Lemma~\ref{lem:BimpliesA_TU}, we have the following {\em algorithmic}
result.
\begin{theorem} \label{thm:TU_plynmltm}
  If the boundary matrix $[\boundary_{p+1}]$ of a finite simplicial
  complex of dimension greater than $p$ is totally
  unimodular, the optimal homologous chain problem~\eqref{opt:OHCP}
  for $p$-chains can be solved in polynomial time.
\end{theorem}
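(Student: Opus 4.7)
The plan is to assemble the chain of reformulations already developed in the excerpt and invoke the polynomial-time guarantee of Corollary~\ref{cor:TU_ILP}. First I would note that Claim~\ref{clm:existence} already secures existence of a minimizer for~\eqref{opt:OHCP}, so the only remaining issue is producing one in polynomial time. I then pass from~\eqref{opt:OHCP} to the integer linear program~\eqref{opt:ILP} by splitting $x_i = x_i^+ - x_i^-$ with $x_i^+, x_i^- \ge 0$ and cost $\sum_i \abs{w_i}(x_i^+ + x_i^-)$. The standard argument (which I would spell out in one line) is that since $\abs{w_i}\ge 0$, any optimal solution has $\min(x_i^+, x_i^-)=0$, and hence $x_i^+ + x_i^- = \abs{x_i}$; so~\eqref{opt:ILP} faithfully represents the weighted $\ell^1$ objective in~\eqref{opt:OHCP} and the two problems have the same optimum.

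Next, to fit~\eqref{opt:basic_ILP}, I would split the free vector $\yy = \yy^+ - \yy^-$ with $\yy^+, \yy^- \ge 0$, as in~\eqref{opt:standard_LP}. This produces an integer linear program in the stacked variable $[\xx^+,\xx^-,\yy^+,\yy^-]^T \ge \mathbf{0}$, with integral right-hand side $\cc$ and constraint matrix $A = \begin{bmatrix} I & -I & -B & B \end{bmatrix}$, where $B = [\boundary_{p+1}]$. Under the hypothesis that $B$ is totally unimodular, Lemma~\ref{lem:BimpliesA_TU} gives that $A$ is totally unimodular as well; and the number of columns of $A$ is $2m + 2n$, polynomial in the number of $p$- and $(p+1)$-simplices of $K$.

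Applying Corollary~\ref{cor:TU_ILP} to this integer linear program then produces an integer optimum $(\xx^{+\ast}, \xx^{-\ast}, \yy^{+\ast}, \yy^{-\ast})$ in time polynomial in the dimensions of $A$. Setting $\xx^\ast = \xx^{+\ast} - \xx^{-\ast}$ and $\yy^\ast = \yy^{+\ast} - \yy^{-\ast}$ yields integer vectors satisfying $\xx^\ast = \cc + [\boundary_{p+1}]\,\yy^\ast$ with $\norm{W\,\xx^\ast}_1$ equal to the optimum of~\eqref{opt:ILP}, hence also of~\eqref{opt:OHCP}. There is no real obstacle here: the proof is a mechanical composition of Lemma~\ref{lem:BimpliesA_TU} and Corollary~\ref{cor:TU_ILP}, and the only fact one must be careful to verify is the equivalence of the split $\ell^1$ reformulation to the original objective, which is immediate from the non-negativity of the weights $\abs{w_i}$.
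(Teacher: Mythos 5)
Your proof is correct and follows essentially the same route as the paper's: pass from~\eqref{opt:OHCP} to the integer linear program~\eqref{opt:ILP}, put it in standard form via $\yy = \yy^+ - \yy^-$ as in~\eqref{opt:standard_LP}, apply Lemma~\ref{lem:BimpliesA_TU} to get total unimodularity of $\begin{bmatrix} I & -I & -B & B \end{bmatrix}$, and invoke Corollary~\ref{cor:TU_ILP}. One small inaccuracy: since the paper explicitly permits some $w_i = 0$, it is not true that \emph{every} optimal $(x_i^+, x_i^-)$ has $\min(x_i^+, x_i^-) = 0$; the correct (and equally short) justification of the $\ell^1$ reformulation is that $\abs{x_i^+ - x_i^-} \le x_i^+ + x_i^-$ with equality achievable by taking $x_i^\pm = \max(\pm x_i, 0)$, which shows the optimal \emph{values} of~\eqref{opt:OHCP} and~\eqref{opt:ILP} coincide.
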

\begin{proof}
  We have seen above that a reformulation of OHCP \eqref{opt:OHCP},
  without the integrality constraints, leads to the linear
  program~\eqref{opt:LP}. By Lemma~\ref{lem:BimpliesA_TU}, the
  equality constraint matrix of this linear program is totally
  unimodular. Then by Corollary~\ref{cor:TU_ILP} the linear
  program~\eqref{opt:LP} can be solved in polynomial time, while
  achieving an integral solution.
\end{proof}

\begin{remark}
  One may wonder why Theorem~\ref{thm:TU_plynmltm} does not work when
  $\Z_2$-valued chains are considered instead of integer-valued
  chains. We could simulate $\Z_2$ arithmetic while using integers or
  reals by modifying~\eqref{opt:OHCP} as follows:
  \begin{equation} 
    \underset{\xx,\, \yy}{\min} \, \norm{W\,\xx}_1 \quad
    \text{such that}\quad \xx + 2\, \uu = \cc + [\boundary_{p+1}] \; \yy, \text{
      and } \xx \in \{0, 1\}^m, \; \uu \in \Z^m, \; \yy \in \Z^n\, .
  \end{equation}
  The trouble is that the coefficient $2$ of $\uu$ destroys the total
  unimodularity of the constraint matrix in the linear programming
  relaxation of the above formulation, even when $[\boundary_{p+1}]$
  is totally unimodular. Thus we cannot solve the above integer
  program as a linear program and still get integer solutions. 
\end{remark}

\begin{remark} \label{rmrk:bounding-chain}
  We can associate weights with $(p+1)$-simplices while formulating
  the optimization problem~\eqref{opt:OHCP}. Then, we could minimize
  $\norm{W\zz}_1$ where $\zz=[\xx,\yy]^T$.  In that case, we obtain a
  $p$-chain $c^*$ homologous to the given chain $c$ and also a
  $(p+1)$-chain $d$ whose boundary is $c^*-c$ and the weights of $c^*$
  and $d$ together are the smallest.  If the given cycle $c$ is null
  homologous, the optimal $y$ would be an optimal $(p+1)$-chain
  bounded by $c$.
\end{remark}

\begin{remark}
  The simplex method and its variants search only the basic feasible
  solutions (vertices of the constraint polyhedron), while choosing
  ones that never make the objective function worse. Thus if the
  polyhedron is integral, one could stop the simplex method at any
  step before reaching optimality and still obtain an integer valued
  homologous chain whose norm is no worse than that of the given
  chain.
\end{remark}

\subsection{Minimizing the number of simplices} 
\label{subsec:L0}

The general weighted $\ell^1$-optimization problem~\eqref{opt:OHCP}
can be specialized by choosing different weight matrices. One can also
solve variations of the OHCP problem by adding other constraints which
do not destroy the total unimodularity of the constraint matrix. We
consider one such specialization here -- that of finding a homologous
chain with the smallest \emph{number} of simplices.

If the matrix $W$ is chosen to be the identity matrix, then one can
solve the $\ell^0$-optimization problem by solving a modified version
of the $\ell^1$-optimization problem~\eqref{opt:OHCP}. One just
imposes the extra condition that every entry of $\cc$ and $\xx$ be in
$\{-1, 0, 1\}$. With this choice of $W = I$ and with $\cc \in \{-1, 0,
1\}^m$, the problem~\eqref{opt:OHCP} becomes:
\begin{equation}
  \label{opt:OHCP_L0}
  \underset{\xx,\, \yy}{\min} \, \norm{\xx}_1 \quad
  \text{such that}\quad \xx = \cc + [\boundary_{p+1}] \; \yy, \text{
    and } \xx \in \{-1, 0, 1\}^m, \;\yy \in \Z^n\, .
\end{equation}

\begin{theorem}\label{thm:L0_plynmltm}
  For any given $p$-chain $\cc \in \{-1, 0, 1\}^m$, a solution to
  problem~\eqref{opt:OHCP_L0} exists. Furthermore, amongst all $\xx$
  homologous to $\cc$, the optimal homologous chain $\xx^\ast$
  has the smallest number of nonzero entries, that is, it is the
  $\ell^0$-optimal homologous chain.
\end{theorem}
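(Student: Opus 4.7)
The plan is to argue in two essentially independent steps: existence of an optimum for~\eqref{opt:OHCP_L0}, and then the observation that under the restriction $\xx\in\{-1,0,1\}^m$ the $\ell^1$ objective coincides with the $\ell^0$ count of nonzero entries.

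For existence, I would mimic the proof of Claim~\ref{clm:existence}. Set
\[
U_\cc := \{ \norm{\xx}_1 \st \xx = \cc + [\boundary_{p+1}]\,\yy,\; \xx\in\{-1,0,1\}^m,\; \yy\in\Z^n \}.
\]
This set is nonempty because $\xx=\cc$ (with $\yy=\mathbf{0}$) is feasible by the hypothesis $\cc\in\{-1,0,1\}^m$. Moreover, once we fix $\xx\in\{-1,0,1\}^m$ there are only finitely many possible values of $\norm{\xx}_1$ (in fact only $m+1$ such values, namely $0,1,\ldots,m$), so $U_\cc$ is a nonempty finite subset of $\Z_{\ge 0}$ and therefore contains its infimum. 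Hence an optimal $\xx^\ast$ exists.

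For the $\ell^0$-optimality claim, I would observe that the map $t\mapsto |t|$ restricted to $\{-1,0,1\}$ is exactly the indicator function of $t\neq 0$. Consequently, for every $\xx\in\{-1,0,1\}^m$,
\[
\norm{\xx}_1 = \sum_{i=0}^{m-1} \abs{x_i} = \#\{i : x_i \neq 0\} = \norm{\xx}_0.
\]
Thus minimizing $\norm{\xx}_1$ and minimizing $\norm{\xx}_0$ over the feasible set of~\eqref{opt:OHCP_L0} are literally the same optimization problem, and any $\xx^\ast$ produced by~\eqref{opt:OHCP_L0} has the smallest number of nonzero entries among all chains $\xx\in\{-1,0,1\}^m$ homologous to $\cc$.

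There is no real obstacle in the argument; the only subtlety worth flagging is the interpretation of ``amongst all $\xx$ homologous to $\cc$'' in the statement, which should be understood as amongst all such $\xx$ with coefficients in $\{-1,0,1\}$, consistent with the natural geometric reading that each simplex is either present (with an orientation) or absent from the chain. Once that is made explicit, the equivalence $\norm{\cdot}_1=\norm{\cdot}_0$ on $\{-1,0,1\}^m$ delivers the conclusion immediately.
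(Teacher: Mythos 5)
Your proof is correct and follows essentially the same two steps as the paper's: existence by finiteness of attainable norm values (the paper references Claim~\ref{clm:existence} directly; your observation that $\{-1,0,1\}^m$ makes the set of norm values finite outright is a small simplification), and $\ell^0$-optimality from the identity $\norm{\xx}_1 = \norm{\xx}_0$ on $\{-1,0,1\}^m$. Your flag about reading ``amongst all $\xx$ homologous to $\cc$'' as ranging over $\{-1,0,1\}^m$-valued chains is the intended interpretation, consistent with Remark~\ref{rem:hour_glass}.
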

\begin{proof}
  The proof of existence is identical to the proof of
  Claim~\ref{clm:existence}. The condition that $\cc$ takes values in
  $-1$, 0, 1 ensures that at least $\xx = \cc$ can be taken as the
  solution if no other homologous chain exists. For the
  $\ell^0$-optimality, note that since the entries of the optimal
  solution $\xx^\ast$ are constrained to be in $\{-1, 0, 1\}$, the
  1-norm measures the number of nonzero entries. Thus the 1-norm
  optimal solution is also the one with the smallest number of
  non-zero entries.
\end{proof}

\begin{remark} \label{rem:hour_glass}
  Note that even with the given chain $\cc$ taking values in $\{-1, 0,
  1 \}$, without the extra constraint that $\xx \in \{-1, 0, 1\}^m$
  (rather than just $\xx \in \Z^m$), the optimal 1-norm solution
  components may take values outside $\{-1, 0, 1\}$. For example,
  consider the simplicial complex $K$ triangulating a cylinder which
  is shaped like an hourglass. Let $c_1$ and $c_2$ be the two boundary
  cycles of the hour glass so that $c_1+c_2$ is not trivial.  Let $z$
  be the smallest cycle around the middle of the hour glass which is
  homologous to each of $c_1$ and $c_2$. Since $c_1+c_2=2z$, the
  optimal cycle homologous to $c_1+c_2$ has values $2$ or $-2$ for
  some edges even if $c_1$ and $c_2$ have values only in $\{-1,0,1\}$
  for all edges. It may or may not be true that the number of nonzero
  entries is minimal in such an optimal solution. We have not proved
  it either way. But Theorem~\ref{thm:L0_plynmltm} provides a
  guarantee for computing $\ell^0$-optimal solution when the
  additional constraints are placed on $\xx$.
\end{remark}
The linear programming relaxation of problem~\eqref{opt:OHCP_L0} is
\begin{align}  \label{opt:LP_L0} 
  \min \;&  \sum_i \, (x_i^+ + x_i^-)\notag\\
  \text{subject to}\quad &
  \xx^+ - \xx^- = \cc + [\boundary_{p+1}]\; \yy\\
  &\xx^+,\; \xx^- \le \mathbf{1}\notag\\
  &\xx^+, \; \xx^- \ge\; \mathbf{0} \, . \notag
\end{align}%
One can show the integrality of the feasible set polyhedron by using
slack variables to convert the inequalities $\xx^+ \le \mathbf{1}$ and
$\xx^- \le \mathbf{1}$ to equalities and then using the $\mathcal{P}$
form of the polyhedron from Theorem~\ref{thm:polyhedron}.
Equivalently, all the constraints can be written as inequalities and
the $\mathcal{Q}$ polyhedron can be used. For a change we choose the
latter method here.  Writing the constraints as inequalities, in
matrix form the constraints are

\begin{equation} \label{eq:inequalities_L0}
\begin{bmatrix}
  -I & \p I & \p B & -B\\
  \p I & -I & -B & \p B\\
  -I & \p0 & \p0 & \p0\\
  \p0 & -I & \p0 & \p0\\
  \p I& \p0 & \p0 & \p0\\
  \p0 & \p I & \p0 & \p0\\
  \p0 & \p0 & \p I & \p0\\
  \p0 & \p0 & \p0 & \p I
\end{bmatrix} 
\begin{bmatrix}
  \xx^+ \\
  \xx^- \\
  \yy^+ \\
  \yy^-
\end{bmatrix} \ge
\begin{bmatrix}
  -\cc\\
  \p\cc\\
  -\mathbf{1}\\
  -\mathbf{1}\\
  \p\mathbf{0}\\
  \p\mathbf{0}\\
  \p\mathbf{0}\\
  \p\mathbf{0}
\end{bmatrix}\, ,
\end{equation}
where $B=[\boundary_{p+1}]$.  Then analogously to
Lemma~\ref{lem:BimpliesA_TU} and Theorem~\ref{thm:TU_plynmltm} the
following are true.
\begin{lemma}
  If $B = [\boundary_{p+1}]$ is totally unimodular then so is the
  constraint matrix in~\eqref{eq:inequalities_L0}.
\end{lemma}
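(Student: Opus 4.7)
The plan is to prove this lemma by the same strategy as Lemma~\ref{lem:BimpliesA_TU}, namely, to build the constraint matrix in~\eqref{eq:inequalities_L0} from $B$ using a sequence of operations that each preserve total unimodularity, as listed on page 280 of Schrijver~\cite{Schrijver1986}. The starting point is the already-established fact that $\begin{bmatrix} I & -I & -B & B\end{bmatrix}$ is totally unimodular whenever $B$ is.

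The first step is to produce the top two block rows of~\eqref{eq:inequalities_L0}. The first block row $\begin{bmatrix}-I & I & B & -B\end{bmatrix}$ is precisely the negation of the second block row $\begin{bmatrix}I & -I & -B & B\end{bmatrix}$. Starting from the totally unimodular matrix $\begin{bmatrix}I & -I & -B & B\end{bmatrix}$ furnished by Lemma~\ref{lem:BimpliesA_TU}, I would duplicate each of its rows and then multiply one copy of each by $-1$. Both row duplication and row negation are on Schrijver's list of TU-preserving operations, so after a harmless reordering of rows the resulting matrix is totally unimodular and has exactly the top two block rows of~\eqref{eq:inequalities_L0} as its rows.

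Next, I would append each of the six remaining block rows one individual row at a time. Every row that sits inside blocks 3 through 8 has a particularly simple form: it contains a single nonzero entry, which is $\pm 1$, so it is (up to sign) a standard basis row. Appending such a row is one of the TU-preserving operations; equivalently, for any square submatrix containing one of these appended rows, Laplace expansion along that row reduces the determinant to $\pm 1$ times a proper minor of the matrix built in the previous stage, which is unimodular by induction on the number of appended rows. Performing this append step $4m+2n$ times produces exactly the constraint matrix of~\eqref{eq:inequalities_L0}.

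Because the argument consists entirely of routine applications of TU-preserving operations layered on top of Lemma~\ref{lem:BimpliesA_TU}, I do not anticipate any substantive obstacle. The only point that requires care is that blocks 3--8 must be appended row-by-row, so that the single-nonzero-entry hypothesis of the relevant TU-preservation rule applies at each step, rather than trying to append an entire $m\times (2m+2n)$ or $n\times(2m+2n)$ block at once.
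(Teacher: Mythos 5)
Your proof is correct and takes essentially the same approach the paper indicates: the paper simply states that this lemma holds ``analogously to Lemma~\ref{lem:BimpliesA_TU}'' and omits the details, and your chain of TU-preserving operations---starting from $\begin{bmatrix} I & -I & -B & B\end{bmatrix}$, duplicating rows and negating one copy to form the top two block rows, then appending $4m+2n$ rows each containing a single $\pm 1$ entry---fills in that analogy exactly.
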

\begin{theorem}
  If the boundary matrix $[\boundary_{p+1}]$ of a finite simplicial
  complex of dimension greater than $p$ is totally unimodular, then
  given a $p$-chain that takes values in $\{-1, 0, 1\}$, a homologous
  $p$-chain with the smallest number of non-zeros taking values in
  $\{-1,0,1\}$ can be found in polynomial time.
\end{theorem}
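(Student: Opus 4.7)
The plan is to chain together the three results that have just been established: the preceding lemma on total unimodularity of the inequality constraint matrix in~\eqref{eq:inequalities_L0}, Theorem~\ref{thm:polyhedron} applied to the polyhedron $\mathcal{Q}$ of the form $\{A\xx \ge \bb\}$, and Theorem~\ref{thm:L0_plynmltm} which identifies the $\ell^0$-optimal chain with the optimum of~\eqref{opt:OHCP_L0}. The polynomial-time claim will then follow from the strongly polynomial-time linear programming machinery cited in the proof of Corollary~\ref{cor:TU_ILP}.

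First I would observe that the linear program~\eqref{opt:LP_L0} is precisely the relaxation of the integer program~\eqref{opt:OHCP_L0}, with its constraints expressed in the inequality form~\eqref{eq:inequalities_L0}. By the preceding lemma, the constraint matrix of~\eqref{eq:inequalities_L0} is totally unimodular, and the right-hand side vector built from $\cc$, $\mathbf{1}$ and $\mathbf{0}$ is integral (recall $\cc \in \{-1,0,1\}^m$). Thus the hypothesis of the second part of Theorem~\ref{thm:polyhedron} applies, and the feasible polyhedron $\mathcal{Q}$ is integral.

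Next I would argue that the LP relaxation~\eqref{opt:LP_L0} attains its optimum at a vertex of $\mathcal{Q}$: feasibility holds because $(\xx^+,\xx^-,\yy^+,\yy^-) = (\cc^+,\cc^-,\mathbf{0},\mathbf{0})$ is feasible, and the objective $\sum_i (x_i^+ + x_i^-)$ is bounded below by $0$, so an optimum exists and is achieved at a vertex by the standard arguments used in the proof of Corollary~\ref{cor:TU_ILP}. Integrality of $\mathcal{Q}$ then guarantees that an optimal vertex has integer coordinates, and because the inequalities $\xx^+ \le \mathbf{1}$, $\xx^- \le \mathbf{1}$, $\xx^+,\xx^- \ge \mathbf{0}$ force the integer entries into $\{0,1\}$, the resulting chain $\xx = \xx^+ - \xx^-$ has entries in $\{-1,0,1\}$ and $\yy = \yy^+ - \yy^-$ lies in $\Z^n$. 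Hence the solution is feasible for~\eqref{opt:OHCP_L0}, and since relaxations give lower bounds, it is optimal for~\eqref{opt:OHCP_L0} as well.

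Finally, the $\ell^0$-optimality of the resulting chain follows from Theorem~\ref{thm:L0_plynmltm}, and polynomial-time solvability follows because interior-point methods solve~\eqref{opt:LP_L0} in polynomial time and the vertex rounding procedure from~\cite{GuHeRoTeTs1993} converts the (possibly non-vertex) LP optimum into an integral vertex optimum in polynomial time, exactly as in the proof of Corollary~\ref{cor:TU_ILP}. There is no real obstacle here; the one spot that deserves a line of care is verifying that the box constraints $\xx^\pm \le \mathbf{1}$ pin the integral vertex coordinates into $\{0,1\}$ so that $\xx$ lies in $\{-1,0,1\}^m$, rather than merely in $\Z^m$ as in Theorem~\ref{thm:TU_plynmltm}.
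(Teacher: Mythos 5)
Your proposal is correct and follows exactly the route the paper intends: it combines the total unimodularity of the inequality constraint matrix in~\eqref{eq:inequalities_L0} (the preceding lemma), integrality of the $\mathcal{Q}$-form polyhedron from Theorem~\ref{thm:polyhedron}, the $\ell^0$-optimality characterization in Theorem~\ref{thm:L0_plynmltm}, and the polynomial-time LP and vertex-rounding machinery from Corollary~\ref{cor:TU_ILP}. The paper gives no detailed proof (it merely says ``analogously to Lemma~\ref{lem:BimpliesA_TU} and Theorem~\ref{thm:TU_plynmltm}''), and your write-up is a faithful and careful expansion of that sketch, including the one genuinely new observation that the box constraints $\xx^\pm \le \mathbf{1}$ force the integral vertex coordinates into $\{0,1\}^m$ so that $\xx = \xx^+ - \xx^-$ lands in $\{-1,0,1\}^m$.
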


In subsequent sections, we characterize the simplicial complexes for
which the boundary matrix $[\boundary_{p+1}]$ is totally
unimodular. These are the main theoretical results of this paper,
formalized as Theorems~\ref{thm:manifolds_TU}, \ref{thm:TUiff_trsnfr},
and~\ref{thm:embedded}.

\section{Manifolds}
\label{sec:manifolds}

Our results in Section~\ref{sec:TUiff_trsnfr} are valid for \emph{any}
finite simplicial complex. But first we consider a simpler case --
simplicial complexes that are triangulations of manifolds. We show
that for finite triangulations of compact $p$-dimensional {\em
  orientable} manifolds, the top non-trivial boundary matrix
$[\boundary_p]$ is totally unimodular irrespective of the orientations
of its simplices. We also give examples of non-orientable manifolds
where total unimodularity does not hold. Further examination of why
total unimodularity does not hold in these cases leads to our main
results in Theorems~\ref{thm:TUiff_trsnfr}.

\subsection{Orientable manifolds}
Let $K$ be a finite simplicial complex that triangulates a
$(p+1)$-dimensional compact orientable manifold $M$. As before, let
$[\boundary_{p+1}]$ be the matrix corresponding to $\boundary_{p+1} :
C_{p+1}(K) \to C_p(K)$ in the elementary chain bases.

\begin{theorem} \label{thm:manifolds_TU}

  For a finite simplicial complex triangulating a $(p+1)$-dimensional
  compact orientable manifold, $[\boundary_{p+1}]$ is totally
  unimodular irrespective of the orientations of the simplices.

\end{theorem}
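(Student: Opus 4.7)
The plan is to reduce to a very clean form of the boundary matrix, where a short inductive argument on the size of square submatrices suffices. Throughout, I will use three basic facts about total unimodularity: it is preserved by (i) transposition, (ii) multiplying any row by $-1$, and (iii) multiplying any column by $-1$.

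First I would observe that changing the orientation of a single $p$-simplex $\sigma_i$ multiplies row $i$ of $[\boundary_{p+1}]$ by $-1$, and changing the orientation of a single $(p+1)$-simplex $\tau_j$ multiplies column $j$ by $-1$. By the invariance above, the total unimodularity of $[\boundary_{p+1}]$ does not depend on the choice of orientations of the simplices of $K$. Hence it suffices to exhibit one convenient choice of orientations and prove total unimodularity for that. Since $M$ is a compact orientable $(p+1)$-manifold, I choose a manifold orientation; this induces coherent orientations on the $(p+1)$-simplices so that $\sum_j \tau_j$ is a (relative, if $M$ has boundary) fundamental cycle. The orientations on the $p$-simplices may be chosen arbitrarily.

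Next I would exploit two structural properties of the matrix $B := [\boundary_{p+1}]$ under these orientations. Since $M$ has dimension $p+1$, every $p$-simplex is a face of at most two $(p+1)$-simplices; equivalently, every row of $B$, and hence every column of $B^T$, has at most two nonzero entries, each in $\{-1,+1\}$. Moreover, the coherent orientation guarantees that whenever a $p$-simplex $\sigma_i$ is shared by two $(p+1)$-simplices $\tau_{j_1},\tau_{j_2}$, the two nonzero entries in row $i$ of $B$ (equivalently, column $i$ of $B^T$) have opposite signs — this is exactly the local cancellation that makes the fundamental class a cycle.

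Then I would verify total unimodularity of $B^T$ (equivalently, of $B$) by induction on the order $k$ of a square submatrix $S$. For $k=1$ the entries lie in $\{-1,0,1\}$. For $k\ge 2$, each column of $S$ inherits the property above: it has at most two nonzero entries, and if it has two, they have opposite signs. If some column of $S$ is zero, then $\det S = 0$. If some column of $S$ has exactly one nonzero entry $\pm 1$, I would expand the determinant along that column and invoke the inductive hypothesis on the resulting $(k-1)\times(k-1)$ minor. Otherwise every column has exactly two nonzero entries of opposite signs; then the sum of all rows of $S$ is the zero vector, so the rows are linearly dependent and $\det S = 0$. In every case $\det S \in \{-1,0,1\}$, completing the induction.

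The only real subtlety is the reduction to a coherent orientation: this is where orientability is used in an essential way, and it sets up the opposite-sign property that drives the inductive step. The rest is a standard argument for incidence-type matrices of bipartite structure and should not present difficulty.
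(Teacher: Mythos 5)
Your proof is correct and rests on exactly the same structural observation as the paper's: a coherent orientation of the $(p+1)$-simplices makes every column of $[\boundary_{p+1}]^T$ have at most two nonzero entries, and when there are two they have opposite signs; arbitrary orientations are then handled by row and column scalings, which preserve total unimodularity. The one genuine difference is the final step. The paper invokes the Heller--Tompkins sufficient condition for total unimodularity of $\{-1,0,1\}$-matrices with at most two nonzeros per column (using the trivial row partition), whereas you prove the relevant special case from scratch by induction on the order of a square submatrix $S$: a zero column gives $\det S = 0$; a column with a single nonzero lets you expand and recurse; and if every column has exactly two opposite-signed nonzeros then the rows of $S$ sum to zero, so $\det S = 0$. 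Your induction is sound and makes the argument self-contained at essentially no extra cost; the paper's citation is shorter but relies on an external theorem. Either version is acceptable, and neither buys materially more generality than the other here.
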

\begin{proof}
  First, we prove the theorem assuming that the $(p+1)$-dimensional
  simplices of $K$ are oriented \emph{consistently}. Then, we argue
  that the result still holds when orientations are arbitrary.

  Consistent orientation of $(p+1)$-simplices means that they are
  oriented in such a way that for the $(p+1)$-chain $c$, which takes
  the value 1 on each oriented $(p+1)$-simplex in $K$,
  $\boundary_{p+1} c$ is carried by the topological boundary
  $\boundary M$ of $M$. If $M$ has no boundary then $\boundary_{p+1}
  c$ is 0. It is known that consistent orientation of
  $(p+1)$-simplices always exists for a finite triangulation of a
  compact orientable manifold. Therefore, assume that the given
  triangulation has consistent orientation for the $(p+1)$-simplices.
  The orientation of the $p$- and lower dimensional simplices can be
  chosen arbitrarily.

  Each $p$-face $\tau$ is the face of either one or two
  $(p+1)$-simplices (depending on whether $\tau$ is a boundary face or
  not). Thus the row of $[\boundary_{p+1}]$ corresponding to $\tau$
  contains one or two nonzeros. Such a nonzero entry is 1 if the
  orientation of $\tau$ agrees with that of the corresponding
  $(p+1)$-simplex and $-1$ if it does not.

  Heller and Tompkins \cite{HeTo1956} gave a sufficient condition for
  the unimodularity of $\{-1,0,1\}$-matrices whose columns have no
  more than two nonzero entries. Such a matrix is totally unimodular
  if its rows can be divided into two partitions (one possibly empty)
  with the following condition. If two nonzeros in a column belong to
  the same partition, they must be of opposite signs, otherwise they
  must be in different row partitions. Consider $[\boundary_{p+1}]^T$,
  the transpose of $[\boundary_{p+1}]$. Each column of
  $[\boundary_{p+1}]^T$ contains at most two nonzero entries, and if
  there are two then they are of opposite signs because of the
  consistent orientations of the $(p+1)$-dimensional simplices. In
  this case, the simple division of rows into two partitions with one
  containing all rows and the other empty works. Thus
  $[\boundary_{p+1}]^T$ and hence $[\boundary_{p+1}]$ is totally
  unimodular.

  Now, reversing the orientation of a $(p+1)$-simplex means that the
  corresponding column of $[\boundary_{p+1}]$ be multiplied by $-1$.
  This column operation preserves the total unimodularity of
  $[\boundary_{p+1}]$. Since any arbitrary orientation of the
  $(p+1)$-simplices can be obtained by preserving or reversing their
  orientations in a consistent orientation, we have the result as
  claimed. 
\end{proof}

As a result of the above theorem and Theorem~\ref{thm:TU_plynmltm} we
have the following result.
\begin{corollary} 
  For a finite simplicial complex triangulating a $(p+1)$-dimensional
  compact orientable manifold, the optimal homologous chain problem
  can be solved for $p$-dimensional chains in polynomial time.
 \label{cor:manifold_plynml}
\end{corollary}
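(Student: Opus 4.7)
The plan is to obtain the corollary by directly chaining the two preceding results, so the argument is essentially a one-line composition rather than a new proof. First I would invoke Theorem~\ref{thm:manifolds_TU} with the given $(p+1)$-dimensional compact orientable manifold $M$ and any finite triangulation $K$ of $M$, concluding that the boundary matrix $[\boundary_{p+1}]$ is totally unimodular (and note that the choice of orientations of the simplices is irrelevant, since Theorem~\ref{thm:manifolds_TU} is proved for arbitrary orientations).

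Next I would feed this totally unimodular boundary matrix into Theorem~\ref{thm:TU_plynmltm}, which asserts precisely that OHCP for $p$-chains is solvable in polynomial time whenever $[\boundary_{p+1}]$ is totally unimodular. Composing the two statements yields the conclusion. The only bookkeeping to check is that the dimension hypotheses line up, namely that $K$ has dimension greater than $p$, which is immediate because $K$ triangulates a $(p+1)$-dimensional manifold and therefore contains $(p+1)$-simplices.

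There is essentially no obstacle: the content of the corollary has already been absorbed into Theorems~\ref{thm:manifolds_TU} and~\ref{thm:TU_plynmltm}, and the corollary merely packages them together. If one wished to elaborate, one could also remark that the polynomial running time is in fact \emph{strongly} polynomial in the number of simplices, using the $\{-1,0,1\}$-entries of $[\boundary_{p+1}]$ and the strongly polynomial linear programming algorithms of Tardos cited in the proof of Corollary~\ref{cor:TU_ILP}, but no further topological input is needed.
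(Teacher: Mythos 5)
Your proposal is correct and matches the paper exactly: the paper states the corollary with the one-line justification that it follows from Theorem~\ref{thm:manifolds_TU} together with Theorem~\ref{thm:TU_plynmltm}, which is precisely your composition. Your added remarks about dimension bookkeeping and strong polynomiality are accurate but not required.
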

The result in Corollary~\ref{cor:manifold_plynml} when specialized to
$\mathbb{R}^{p+1}$ also appears in~\cite{Sullivan1990} though the
reasoning is different.

\subsection{Non-orientable manifolds} 
\label{subsec:nonorientable}

For non-orientable manifolds we give two examples which show that
total unimodularity may not hold in this case. We also
discuss the role of torsion in these examples in preparation 
for Theorem~\ref{thm:TUiff_trsnfr}. 

Our first example is the M\"obius strip and the second one is the
projective plane. Simplicial complexes for these two non-orientable
surfaces are shown in Figure~\ref{fig:nonorntbl}. The boundary
matrices $[\boundary_2]$ for these simplicial complexes are given in
the Appendix in~\eqref{mtx:moebius_b2} and~\eqref{mtx:prjctvpln_b2}.
\begin{figure}[ht]
  \centering
 \includegraphics[scale=0.85, trim=1.25in 8in 3.75in 1in, clip]
    {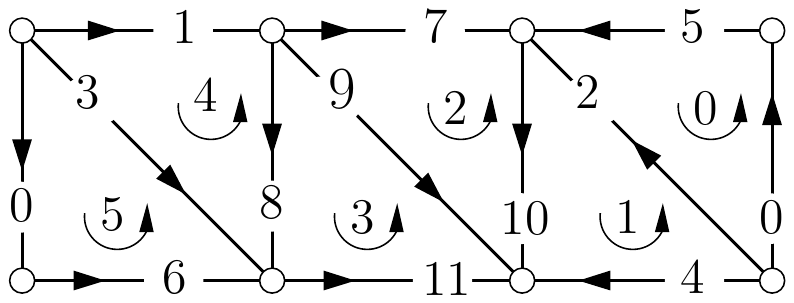}%
  \includegraphics[scale=0.85]
    {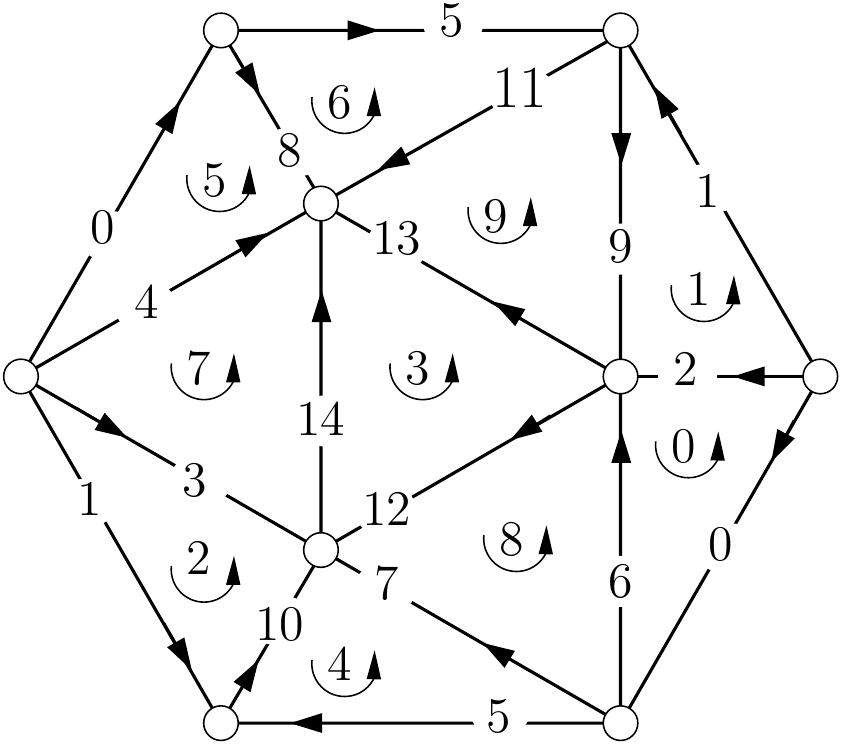}%
 \caption{Triangulations of two non-orientable manifolds, shown as
   abstract simplicial complexes. The left figure shows a
   triangulation of the M\"obius strip and the right one shows the
   projective plane. The numbers are the edge and triangles numbers.
   These correspond to the row and column numbers of the
   matrices~\eqref{mtx:moebius_b2} and~\eqref{mtx:prjctvpln_b2}.}
\label{fig:nonorntbl} 
\end{figure} 

Let $M$ be the M\"obius strip. We consider its absolute homology
$H_1(M)$ and its relative homology $H_1(M,\boundary M)$ relative to
its boundary. Consult \cite[page 135]{Munkres1984} to see how the
various homology groups are calculated using an exact sequence. We
note that $H_1(M) \cong \Z$, that is, its $H_1$ group has no
torsion. This can be seen by reducing the
matrix~\eqref{mtx:moebius_b2} in the Appendix to Smith normal form
(SNF). The SNF for the matrix consists of a $6\times 6$ identity
matrix on the top and a zero block below, which implies the absence of
torsion.

Let $K$ be the simplicial complex triangulating $M$. Consider a
submatrix $S$ of the matrix $[\boundary_2]$ shown in Appendix
as~\eqref{mtx:moebius_b2}. This submatrix is formed by selecting the
columns in the order 5, 4, 3, 2, 1, 0. From the matrix thus formed,
select the rows 0, 3, 8, 9, 10, 2 in that order. This selection of
rows and columns corresponds to all the triangles and the edges
encountered as one goes from left to right in the M\"{o}bius
triangulation shown in Figure~\ref{fig:nonorntbl}. The resulting
submatrix is
\[ 
S = \begin{bmatrix}
  \phantom{-}1&\phantom{-}0&\phantom{-}0&\phantom{-}0&
  \phantom{-}0&\phantom{-}1\\
  -1&\phantom{-}1&\phantom{-}0&\phantom{-}0&\phantom{-}0&
 \phantom{-}0\\
  \phantom{-}0&-1&\phantom{-}1&\phantom{-}0&\phantom{-}0&
  \phantom{-}0\\
  \phantom{-}0&\phantom{-}0&-1&\phantom{-}1&\phantom{-}0&
  \phantom{-}0\\
  \phantom{-}0&\phantom{-}0&\phantom{-}0&-1&\phantom{-}1&
  \phantom{-}0\\
  \phantom{-}0&\phantom{-}0&\phantom{-}0&\phantom{-}0&
  \phantom{-}1&-1\\
\end{bmatrix}
\]

The determinant of this matrix is $-2$ and this shows that the
boundary matrix is not totally unimodular. The SNF for this matrix, it
turns out, \emph{does} reveal the torsion. This matrix $S$ is the
relative boundary matrix $\rltvbndry{2}{L}{L_0}$ where $L = K$ and
$L_0$ are the edges in $\boundary M$. The SNF has 1's along the
diagonal and finally a 2. This is an example where there is no torsion
in the absolute homology but some torsion in the relative homology and
the boundary matrix is not totally unimodular. We formulate this
condition precisely in Theorem~\ref{thm:TUiff_trsnfr}.

The matrix $[\boundary_2]$ given in Appendix
as~\eqref{mtx:prjctvpln_b2} for the projective plane triangulation is
much larger. But it is easy to find a submatrix with determinant
greater than 1. This can be done by finding the M\"obius strip in the
triangulation of the projective plane. For example if one traverses
from top to bottom in the triangulation of the projective plane in
Figure~\ref{fig:nonorntbl} the triangles encountered correspond to
columns 6, 9, 3, 8, 4 of~\eqref{mtx:prjctvpln_b2} and the edges
correspond to rows 5, 11, 13, 12, 7. The corresponding submatrix is
\[
S =
\begin{bmatrix}
-1&\phantom{-}0&\phantom{-}0&\phantom{-}0&-1\\
-1&\phantom{-}1&\phantom{-}0&\phantom{-}0&\phantom{-}0\\
\phantom{-}0&-1&\phantom{-}1&\phantom{-}0&\phantom{-}0\\
\phantom{-}0&\phantom{-}0&-1&\phantom{-}1&\phantom{-}0\\
\phantom{-}0&\phantom{-}0&\phantom{-}0&-1&\phantom{-}1\\
\end{bmatrix}
\]
and its determinant is $-2$. Thus the boundary
matrix~\eqref{mtx:prjctvpln_b2} is not totally unimodular. Again, we
observe that there is relative torsion in $H_1(L,L_0)$ for the
subcomplexes corresponding to the selection of $S$ from
$[\boundary_2]$. Here $L$ consists of the triangles specified above,
which form a M\"obius strip in the projective plane. The subcomplex
$L_0$ consists of the edges forming the boundary of this strip. This
connection between submatrices and relative homology is examined in
the next section.

\section{Simplicial complexes} \label{sec:smplcl_cmplxs} 

Now we consider the more general case of simplicial complexes. Our
result in Theorem~\ref{thm:TUiff_trsnfr} characterizes the total
unimodularity of boundary matrices for arbitrary simplicial complexes.
Since we do not use any conditions about the geometric realization or
embedding in $\R^n$ for the complex, the result is also valid for
abstract simplicial complexes. As a corollary of the characterization
we show that the OHCP can be solved in polynomial time as long as
the input complex satisfies a torsion-related condition.

\subsection{Total unimodularity and relative torsion}
\label{sec:TUiff_trsnfr}
Let $K$ be a finite simplicial complex of dimension greater than $p$.
We will need to refer to its subcomplexes formed by the union of some
of its simplices of a specific dimension. This is formalized in the
definition below.
\begin{definition} \label{def:pure}
  A \emph{pure simplicial complex} of dimension $p$ is a simplicial
 complex formed by a collection of $p$-simplices and their proper
 faces. Similarly, a \emph{pure subcomplex} is a subcomplex that is a
 pure simplicial complex.
\end{definition}
An example of a pure simplicial complex of dimension $p$ is one that
triangulates a $p$-dimensional manifold. Another example, relevant to
our discussion, is a subcomplex formed by a collection of some
$p$-simplices of a simplicial complex and their lower dimensional
faces.

Let $L\subseteq K$ be a pure subcomplex of dimension $p+1$ and
$L_0\subset L$ be a pure subcomplex of dimension $p$. If
$[\boundary_{p+1}]$ is the matrix representing $\boundary_{p+1} :
C_{p+1}(K) \to C_p(K)$, then the matrix representing the relative
boundary operator
\[
\rltvbndry{p+1}{L}{L_0} : C_{p+1}(L,L_0) \to C_p(L, L_0)\, ,
\]
is obtained by first \emph{including} the columns of
$[\boundary_{p+1}]$ corresponding to $(p+1)$-simplices in $L$ and
then, from the submatrix so obtained, \emph{excluding} the rows
corresponding to the $p$-simplices in $L_0$ and any zero rows. The
zero rows correspond to $p$-simplices that are not faces of any of the
$(p+1)$-simplices of $L$.

As before, let $[\boundary_{p+1}]$ be the matrix of $\boundary_{p+1}$
in the elementary chain bases for $K$. Then the following holds.
\begin{theorem} \label{thm:TUiff_trsnfr} $[\boundary_{p+1}]$ is
  totally unimodular if and only if $H_p(L, L_0)$ is torsion-free,
  for all pure subcomplexes $L_0 , L$ of $K$ of dimensions $p$ 
  and $p+1$ respectively, where $L_0 \subset L$.
\end{theorem}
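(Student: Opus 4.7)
The plan is to establish the equivalence through an intermediate bridge lemma: torsion-freeness of $H_p(L, L_0)$ is equivalent to every diagonal entry of the Smith normal form (SNF) of $\rltvbndry{p+1}{L}{L_0}$ being $0$ or $1$. Then I would match square submatrices of $[\boundary_{p+1}]$ with matrices of the form $\rltvbndry{p+1}{L}{L_0}$, and convert between the SNF condition and total unimodularity through the theory of determinantal divisors.

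First, I would prove the bridge lemma. The relative chain complex gives $H_p(L,L_0) = \ker \rltvbndry{p}{L}{L_0} / \im \rltvbndry{p+1}{L}{L_0}$, and the key observation is that $C_{p-1}(L, L_0)$ is free abelian, since the $(p-1)$-simplices of $L$ not in $L_0$ form a basis. Consequently, the map $C_p(L, L_0) / \im \rltvbndry{p+1}{L}{L_0} \to C_{p-1}(L, L_0)$ induced by $\rltvbndry{p}{L}{L_0}$ must kill all torsion in its domain, so the torsion subgroup of $\operatorname{coker} \rltvbndry{p+1}{L}{L_0}$ is contained in the kernel of this induced map, which is exactly $H_p(L, L_0)$. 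Since $H_p(L, L_0)$ also sits inside the cokernel, the torsion subgroups of $H_p(L, L_0)$ and of $\operatorname{coker} \rltvbndry{p+1}{L}{L_0}$ coincide. A standard fact from linear algebra over $\Z$ identifies this torsion with $\bigoplus_{d_i > 1} \Z/d_i$, where $d_1 \mid d_2 \mid \cdots$ are the SNF invariant factors of $\rltvbndry{p+1}{L}{L_0}$. Hence $H_p(L, L_0)$ is torsion-free iff every SNF entry lies in $\{0, 1\}$.

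Next, I would set up the combinatorial correspondence between square submatrices of $[\boundary_{p+1}]$ and relative boundary matrices. Given any square $k \times k$ submatrix $S$ with columns indexed by $(p+1)$-simplices $\tau_1, \ldots, \tau_k$ and rows by $p$-simplices $\sigma_1, \ldots, \sigma_k$, let $L$ be the pure $(p+1)$-subcomplex with top simplices $\{\tau_j\}$. If some $\sigma_i$ is not a face of any $\tau_j$, the corresponding row of $S$ is zero and $\det S = 0$. Otherwise every $\sigma_i$ is a $p$-simplex of $L$, and taking $L_0$ to be the pure $p$-subcomplex of $L$ whose $p$-simplices are all $p$-simplices of $L$ other than $\sigma_1, \ldots, \sigma_k$ yields $S = \rltvbndry{p+1}{L}{L_0}$ exactly. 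Conversely, every $\rltvbndry{p+1}{L}{L_0}$ arises this way.

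With these pieces in hand, both implications follow. For $(\Rightarrow)$: total unimodularity passes to submatrices and to deletion of zero rows, so $\rltvbndry{p+1}{L}{L_0}$ is totally unimodular whenever $[\boundary_{p+1}]$ is; then every determinantal divisor is $0$ or $1$, so every SNF entry is $0$ or $1$, and the bridge lemma gives that $H_p(L, L_0)$ is torsion-free. For $(\Leftarrow)$: given any square submatrix $S$ with no zero row, realize $S = \rltvbndry{p+1}{L}{L_0}$ as above; torsion-freeness of $H_p(L, L_0)$ forces the SNF entries of $S$ to lie in $\{0, 1\}$, and since $\det S$ equals the product of these entries up to sign, we conclude $\det S \in \{0, \pm 1\}$. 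The main obstacle is the bridge lemma — pinning down the torsion of the relative homology as exactly the nontrivial SNF entries of the relative boundary matrix — but once this identification is in hand, the rest reduces to bookkeeping about determinantal divisors.
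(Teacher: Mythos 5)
Your proof is correct and follows essentially the same route as the paper: both hinge on the correspondence between square submatrices of $[\boundary_{p+1}]$ and relative boundary matrices $\rltvbndry{p+1}{L}{L_0}$, together with the Smith-normal-form characterization of torsion; the only cosmetic differences are that you argue both directions directly rather than by contrapositive and you spell out the torsion--SNF ``bridge lemma'' that the paper delegates to a citation of Munkres.
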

\begin{proof}
  $(\Rightarrow)$ We show that if $H_p(L,L_0)$ has torsion for some
  $L, L_0$ then $[\boundary_{p+1}]$ is not totally unimodular. Let
  $\left[\rltvbndry{p+1}{L}{L_0}\right]$ be the corresponding relative
  boundary matrix. Bring $\left[\rltvbndry{p+1}{L}{L_0}\right]$ to
  Smith normal form using the reduction algorithm
  \cite{Munkres1984}[pages 55--57]. This is a block matrix
  \[
  \begin{bmatrix}
     D & 0 \\
     0 & 0
  \end{bmatrix}
  \]
  where $D = \operatorname{diag}(d_1,\dots,d_l)$ is a diagonal matrix
  and the block row or column of zero matrices shown above may be
  empty, depending on the dimension of the matrix. Recall that $d_i$
  are integers and $d_i \ge 1$. Moreover, since $H_p(L, L_0)$ has
  torsion, $d_k > 1$ for some $1 \le k \le l$.  Thus the product $d_1
  \dots d_k$ is greater than 1. By a result of Smith \cite{Smith1861}
  quoted in \cite[page 50]{Schrijver1986}, this product is the greatest
  common divisor of the determinants of all $k \times k$ square
  submatrices of $\left[\rltvbndry{p+1}{L}{L_0}\right]$. But this
  implies that some square submatrix of
  $\left[\rltvbndry{p+1}{L}{L_0}\right]$, and hence of $[\boundary_{p+1}]$,
  has determinant magnitude greater than 1. Thus $[\boundary_{p+1}]$ is
  not totally unimodular.

  \bigskip\medskip
  \noindent $(\Leftarrow)$ Assume that $[\boundary_{p+1}]$ is not
  totally unimodular. We will show that then there exist 
  subcomplexes 
  $L_0$ and $L$ of dimensions $p$ and $(p+1)$ respectively, with $L_0
  \subset L$, such that $H_p(L, L_0)$ has torsion. Let $S$ be a square
  submatrix of $[\boundary_{p+1}]$ such that $\abs{\det(S)} > 1$. Let
  $L$ correspond to the columns of $[\boundary_{p+1}]$ that are
  \emph{included}  in $S$ and let $B_L$ be the submatrix of
  $[\boundary_{p+1}]$ formed by these columns. This submatrix $B_L$
  may contain zero rows. Those zero rows (if any) correspond to
  $p$-simplices that do not occur as a face of any of the
  $(p+1)$-simplices in $L$. In order to form $S$ from $B_L$, these
  zero rows can first be safely discarded to form a submatrix $B'_L$.
  This is because $\det(S) \ne 0$ and so these zero rows cannot occur
  in $S$. 

  The rows in $B'_L$ correspond to $p$-simplices that occur as a face
  of some $(p+1)$-simplex in $L$. Let $L_0$ correspond to rows of
  $B'_L$ which are \emph{excluded} to form $S$. Now $S$ is the matrix
  representation of the relative boundary matrix
  $\rltvbndry{p}{L}{L_0}$. Reduce $S$ to Smith normal form. The normal
  form is a square diagonal matrix. Since the elementary row and
  column operations preserve determinant magnitude, the determinant of
  the resulting diagonal matrix has magnitude greater than 1. Thus at
  least one of the diagonal entries in the normal form is greater than
  1. But then by \cite [page~61]{Munkres1984} $H_p(L, L_0)$ has
  torsion.
\end{proof}

\begin{remark}
  The characterization appears to be no easier to check than the
  definition of total unimodularity since it involves checking
  \emph{every} $L, L_0$ pair.  However, it is also no \emph{harder} to
  check than total unimodularity. This leads to the following result
  of possible interest in computational topology and matroid theory.
\end{remark}

\begin{corollary}
  For a simplicial complex $K$ of dimension greater than $p$, there is
  a polynomial time algorithm for answering the following question: Is
  $H_p(L,L_0)$ torsion-free for \emph{all} subcomplexes $L_0$ and $L$
  of dimensions $p$ and $(p+1)$ such that $L_0 \subset L$?
\end{corollary}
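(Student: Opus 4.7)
The plan is to reduce the topological question to a purely matrix-theoretic question about total unimodularity, and then invoke a known polynomial-time algorithm for testing total unimodularity.

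First, I would observe that by Theorem~\ref{thm:TUiff_trsnfr} the following equivalence holds: $H_p(L,L_0)$ is torsion-free for every pair of pure subcomplexes $L_0 \subset L$ of dimensions $p$ and $p+1$ respectively, if and only if the boundary matrix $[\boundary_{p+1}]$ of $K$ is totally unimodular. Moreover, any subcomplexes $L_0 \subset L$ of dimensions $p$ and $p+1$ determine canonical pure subcomplexes (take $L$ to be the union of its $(p+1)$-simplices with all their faces, and $L_0$ the union of its $p$-simplices with all their faces); dropping a simplex from $L$ or $L_0$ that is not pure does not change the relevant relative homology computation used in the proof of Theorem~\ref{thm:TUiff_trsnfr}. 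So the quantifier ``for all subcomplexes'' can be replaced by ``for all pure subcomplexes'' without changing the answer, and the problem reduces to deciding total unimodularity of a single matrix.

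Next, the matrix $[\boundary_{p+1}]$ has size at most $m \times n$ where $m$ and $n$ are the numbers of $p$- and $(p+1)$-simplices of $K$, and it can be assembled in time polynomial in the size of $K$ directly from the simplex incidences (each column has at most $p+2$ nonzero entries, all equal to $\pm 1$). Thus the input to our decision procedure is a $\{-1,0,1\}$-matrix of polynomial size.

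Finally, I would invoke the classical result that total unimodularity of an integer matrix can be tested in polynomial time. This is a consequence of Seymour's decomposition theorem for totally unimodular matrices, which yields a polynomial-time recognition algorithm (see \cite[Chapter 20]{Schrijver1986}); later work of Truemper gives improved implementations. Composing this recognition algorithm with the reduction above gives the desired polynomial-time procedure: build $[\boundary_{p+1}]$ from $K$, run the total-unimodularity test, and return the answer.

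The main (and essentially the only) nontrivial ingredient is the existence of a polynomial-time total-unimodularity test, which is a deep result from combinatorial matrix theory but is by now standard and can be cited as a black box. All of the topology is absorbed into the already-proved Theorem~\ref{thm:TUiff_trsnfr}, so no further homological computations are required.
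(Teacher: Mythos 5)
Your proof is correct and follows the same approach as the paper's: reduce to testing total unimodularity of $[\boundary_{p+1}]$ via Theorem~\ref{thm:TUiff_trsnfr}, then invoke the polynomial-time recognition algorithm for total unimodularity derived from Seymour's decomposition theorem. Your added remark reconciling the corollary's phrasing ``all subcomplexes'' with the theorem's ``all pure subcomplexes'' addresses a small imprecision that the paper's one-line proof elides; it is stated a bit tersely but the underlying observation (that extra simplices outside the pure part only add zero rows to, or leave unchanged, the relative boundary matrix whose Smith normal form controls the torsion of $H_p(L,L_0)$) is sound.
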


\begin{proof}
  Seymour's decomposition theorem for totally unimodular
  matrices \cite{Seymour1980},\cite[Theorem 19.6]{Schrijver1986}
  yields a polynomial time algorithm for deciding if a matrix is
  totally unimodular or not \cite[Theorem 20.3]{Schrijver1986}. That
  algorithm applied on the boundary matrix $[\boundary_{p+1}]$ proves
  the above assertion.
\end{proof}

\begin{remark}
  Note that the naive algorithm for the above problem is clearly
  exponential. For every pair $L, L_0$ one can use a polynomial time
  algorithm to find the Smith normal form. But the number of $L, L_0$
  pairs is exponential in the number of $p$ and $(p+1)$-simplices of
  $K$. 
\end{remark}

\begin{remark}
  The same polynomial time algorithm answers the
  question : Does $H_p(L,L_0)$ have torsion for \emph{some} pair
  $L, L_0$ ?
\end{remark}

\subsection{A special case}

In Section~\ref{sec:manifolds} we have seen the special case of
compact orientable manifolds. We saw that the top dimensional boundary
matrix of a finite triangulation of such a manifold is totally
unimodular. Now we show another special case for which the boundary
matrix is totally unimodular and hence OHCP is polynomial time
solvable. This case occurs when we ask for optimal $d$-chains in a
simplicial complex $K$ which is embedded in $\R^{d+1}$. In particular,
OHCP can be solved by linear programming for $2$-chains in
$3$-complexes embedded in $\R^3$. This follows from the following
result:
\begin{theorem} \label{thm:embedded}

  Let $K$ be a finite simplicial complex embedded in $\R^{d+1}$.
  Then, $H_d(L,L_0)$ is torsion-free for all pure subcomplexes $L_0$
  and $L$ of dimensions $d$ and $d+1$ respectively, such that $L_0
  \subset L$.
\end{theorem}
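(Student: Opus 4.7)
The plan is to reduce Theorem~\ref{thm:embedded} to the $(\Rightarrow)$ direction of Theorem~\ref{thm:TUiff_trsnfr} by proving that the full boundary matrix $[\boundary_{d+1}]$ of $K$ is totally unimodular. Torsion-freeness of $H_d(L,L_0)$ for every admissible pair $(L,L_0)$ then follows by contrapositive, and the argument for total unimodularity closely parallels the manifold proof of Theorem~\ref{thm:manifolds_TU}; the only new ingredient is a geometric co-face bound furnished by the embedding into $\R^{d+1}$.

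First I would establish the co-face bound: in a simplicial complex embedded in $\R^{d+1}$, each $d$-simplex $\tau$ is a face of at most two $(d+1)$-simplices of $K$. Fix $\tau$ and pick a point $p$ in its relative interior. The affine hyperplane $H$ spanned by $\tau$ separates a small neighborhood of $p$ into two sides, and any $(d+1)$-simplex $\sigma$ of $K$ having $\tau$ as a face is determined near $p$ by the side of $H$ on which its apex vertex $v$ sits. If three such simplices existed, two of them, say $\sigma_1=\operatorname{conv}(\tau\cup\{v_1\})$ and $\sigma_2=\operatorname{conv}(\tau\cup\{v_2\})$, would have apices on the same side. Parametrizing points of $\sigma_i$ as $(1-t_i)q_i+t_i v_i$ with $q_i\in\tau$ and $t_i\in(0,1)$, and projecting the equation $(1-t_1)q_1+t_1 v_1=(1-t_2)q_2+t_2 v_2$ onto the normal direction to $H$ forces $t_1 d_1=t_2 d_2$, where $d_i$ is the signed distance from $v_i$ to $H$; since the $d_i$ share a sign, this is solvable with $t_1,t_2>0$, and for $t_2$ small enough the remaining tangential equation can be solved with $q_1,q_2$ in the relative interior of $\tau$. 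The common point is then interior to both $\sigma_1$ and $\sigma_2$, contradicting the simplicial complex axiom that distinct simplices meet only in a common face.

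With the co-face bound in hand, I would orient each $(d+1)$-simplex of $K$ consistently with a fixed orientation of $\R^{d+1}$. Two $(d+1)$-simplices sharing a $d$-face $\tau$ must lie on opposite sides of $H$, so they induce opposite orientations on $\tau$. Consequently each row of $[\boundary_{d+1}]$ contains at most two nonzero entries, and when two appear they carry opposite signs. Applying the Heller--Tompkins criterion to $[\boundary_{d+1}]^T$, with every row placed in a single partition exactly as in the proof of Theorem~\ref{thm:manifolds_TU}, certifies total unimodularity. Reorienting simplices corresponds to multiplying rows or columns by $-1$ and preserves total unimodularity, so the conclusion is independent of the chosen orientations, and Theorem~\ref{thm:TUiff_trsnfr} finishes the argument.

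The step I expect to require the most care is the co-face bound, since everything downstream is matrix manipulation and an appeal to results already in place. The subtlety is that coincidence of the \emph{tangent cones} of $\sigma_1$ and $\sigma_2$ at $p$ is easy to see but is not by itself an overlap of interiors; the explicit convex-combination calculation above is what upgrades tangent-cone coincidence to the existence of a genuine interior common point, and hence to a contradiction with the embedding hypothesis.
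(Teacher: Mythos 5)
Your proof is correct but takes a genuinely different route from the paper's. The paper argues directly for torsion-freeness of $H_d(L,L_0)$ by algebraic topology: it applies the Universal Coefficient Theorem so that torsion in $H_d(L,L_0)$ would force torsion in $H^{d+1}(L,L_0)$, and then Alexander--Spanier duality identifies $H^{d+1}(L,L_0)$ with a zero-dimensional homology group of a pair of complements in $\R^{d+1}$, which can never have torsion. You instead prove total unimodularity of $[\boundary_{d+1}]$ by an elementary geometric argument --- the co-face bound coming from the embedding, a consistent orientation inherited from a fixed orientation of $\R^{d+1}$, and the Heller--Tompkins criterion --- and then invoke the forward direction of Theorem~\ref{thm:TUiff_trsnfr} to deduce torsion-freeness. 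This parallels the manifold case (Theorem~\ref{thm:manifolds_TU}) very closely, with the embedding hypothesis standing in for the manifold hypothesis; in particular the orientation is read off from the ambient $\R^{d+1}$, so the argument works even when the top-dimensional simplices do not form a pseudomanifold. Your route avoids the UCT/duality machinery and yields the total unimodularity that the algorithmic corollary actually uses directly, whereas the paper obtains torsion-freeness first and then passes through Theorem~\ref{thm:TUiff_trsnfr} in the other direction. You were also right to flag the co-face bound as the delicate step: mere coincidence of tangent cones at a shared $d$-face does not give an interior overlap, and the explicit convex-combination calculation (matching the normal component via $t_1 d_1 = t_2 d_2$ with $d_1, d_2$ of the same sign, then solving the tangential equation with $q_1, q_2$ in the relative interior of $\tau$ for small $t_i$) is exactly what is needed to produce a genuine common interior point and contradict the simplicial-complex axiom.
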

\begin{proof}
  We consider the $(d+1)$-dimensional relative \emph{cohomology group}
  $H^{d+1}(L,L_0)$ (See~\cite{Munkres1984} for example). It follows
  from the Universal Coefficient Theorem for cohomology~\cite[Theorem
  53.1]{Munkres1984} that
 \[
  H^{d+1}(L,L_0)=\Hom(H_{d+1}(L,L_0),\, \Z) \oplus 
  \Ext(H_d(L,L_0),\, \Z)
  \]
  where $\Hom$ is the group of all homomorphisms from $H_{d+1}(L,L_0)$
  to $\Z$ and $\Ext$ is the group of all of extensions between
  $H_d(L,L_0)$ and $\Z$. These definitions can be found
  in~\cite[Chapter 5 and 7]{Munkres1984}. The main observation is that
  if $H_d(L,L_0)$ has torsion, $\Ext(H_d(L,L_0),\Z)$ has torsion and
  hence $H^{d+1}(L,L_0)$ has torsion.

  On the other hand, by Alexander Spanier
  duality~\cite[page~296]{Spanier1966}
  \[ H^{d+1}(L,L_0)=H_0(\R^{d+1}\setminus \abs{L_0}, \R^{d+1}\setminus
  \abs{L})
  \] 
  where $\abs{L}$ denotes the underlying space of $L$.  Since
  $0$-dimensional homology groups cannot have torsion,
  $H^{d+1}(L,L_0)$ cannot have torsion. We reach a contradiction.
\end{proof}

\begin{corollary} 
  Given a $d$-chain $c$ in a weighted finite simplicial complex
  embedded in $\R^{d+1}$, an optimal chain homologous to $c$ can be
  computed by a linear program.
\end{corollary}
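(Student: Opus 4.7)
The plan is to chain three of the preceding results together; no new ideas are required. First, setting $p = d$, I would invoke Theorem~\ref{thm:embedded} on the given embedded complex $K \subset \R^{d+1}$ to conclude that the relative homology group $H_d(L, L_0)$ is torsion-free for \emph{every} pair of pure subcomplexes $L_0 \subset L$ of $K$ with $\dim L_0 = d$ and $\dim L = d+1$. This supplies the global torsion-freeness hypothesis needed downstream.

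Next I would apply the $(\Leftarrow)$ direction of Theorem~\ref{thm:TUiff_trsnfr}, again with $p = d$: the torsion-freeness just established implies that the boundary matrix $[\boundary_{d+1}]$ of $K$ is totally unimodular. Finally, I would invoke Theorem~\ref{thm:TU_plynmltm} (whose proof already records how OHCP~\eqref{opt:OHCP} is reformulated as the linear program~\eqref{opt:LP} via Lemma~\ref{lem:BimpliesA_TU} and Corollary~\ref{cor:TU_ILP}) to conclude that the $d$-dimensional OHCP for the input chain $c$ and weight matrix $W$ can be solved by the linear program~\eqref{opt:LP}; the resulting optimum is automatically integral and is therefore the desired optimal $d$-chain homologous to $c$.

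There is no genuine obstacle — the corollary is a direct bundling of the preceding theorems. The only bookkeeping point worth verifying is that the index shift lines up consistently: optimizing over $d$-chains requires total unimodularity of $[\boundary_{d+1}]$, which by Theorem~\ref{thm:TUiff_trsnfr} requires torsion-freeness of $H_d(L,L_0)$, and this is precisely what Theorem~\ref{thm:embedded} delivers for any complex embedded in $\R^{d+1}$.
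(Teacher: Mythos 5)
Your proof is correct and follows essentially the same route as the paper: the paper's own proof is a one-line citation of Theorem~\ref{thm:embedded}, Theorem~\ref{thm:TUiff_trsnfr}, and Corollary~\ref{cor:TU_ILP}, and your chain (Theorem~\ref{thm:embedded} $\Rightarrow$ torsion-freeness, $(\Leftarrow)$ of Theorem~\ref{thm:TUiff_trsnfr} $\Rightarrow$ total unimodularity of $[\boundary_{d+1}]$, then Theorem~\ref{thm:TU_plynmltm} which repackages Lemma~\ref{lem:BimpliesA_TU} and Corollary~\ref{cor:TU_ILP}) is exactly that argument, spelled out. Your careful check of the index alignment ($d$-chains $\leftrightarrow$ $[\boundary_{d+1}]$ $\leftrightarrow$ $H_d(L,L_0)$) is the right bookkeeping and matches the intended application.
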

\begin{proof} Follows from Theorem~\ref{thm:embedded},
  Theorem~\ref{thm:TUiff_trsnfr}, and Theorem~\ref{cor:TU_ILP}.
\end{proof}

\subsection{Total unimodularity and M\"obius complexes}
\label{ssec:TUandMC}

As another special case, we provide a characterization of the total
unimodularity of $(p+1)$-boundary matrix of simplicial complexes in
terms of a forbidden complex called M\"{o}bius complex, for $p \leq
1$. In contrast to the previous characterization (in terms of relative
homology of $K$), we directly employ certain results on totally
unimodular matrices to derive this characterization in terms of
submatrices called cycle matrices. We show in
Theorem~\ref{thm:TUiff_noMCMs_2cplx} that the $(p+1)$-boundary matrix
of a finite simplicial complex for $p \leq 1$ is totally unimodular if
and only if the input complex does not have a $(p+1)$-dimensional
M\"{o}bius complex as a subcomplex. In particular, this observation
along with Theorem~\ref{thm:TUiff_trsnfr} implies that a $2$-complex
does not have relative torsion if and only if it does not have a
M\"{o}bius complex in it. We also demonstrate by example that this
result does not generalize to higher values of $p$.

\begin{definition}
   A $(p+1)$-dimensional \emph{cycle complex} is a sequence $\sigma_0,
   \dots,\sigma_{k-1}$ of $(p+1)$-simplices such that $\sigma_i$ and
   $\sigma_{j}$ have a common face if and only if $j=(i+1) \pmod{k}$
   and that common face is a $p$-simplex. Such a cycle complex
   triangulates a $(p+1)$-manifold. We call it a $(p+1)$-dimensional
   \emph{cylinder complex} if it is orientable and a
   $(p+1)$-dimensional \emph{M\"obius complex} if it is nonorientable.
\end{definition}


\begin{definition} \label{def:CM}
For $k \geq 2$, a $k \times k$ matrix $C$ is called a \emph{$k$-cycle 
matrix ($k$-CM)} if $C_{ij} \in \{-1,0,1\}$, and $C$ has the following
form up to row and column permutations and scalings by $-1$:
  \begin{equation} \label{mtx:cycle_matrix} 
    C = \begin{bmatrix} 1 & 0 & 0 & \cdots & 0 & 0 & \beta \\ 
      1 & 1 & 0 & \cdots & 0 & 0 &  0  \\ 
      0 & 1 & 1 & \cdots & 0 & 0 &  0  \\
      \vdots & \vdots & \vdots & \ddots &  \vdots & \vdots & \vdots \\ 
      0 & 0 & 0 & \cdots & 1 & 0 & 0\\
      0  & 0 & 0 & \cdots & 1  & 1 & 0 \\
      0  & 0 & 0 & \cdots & 0  & 1 & 1 
    \end{bmatrix},\;
    \beta = \pm 1.
  \end{equation}
A $k$-CM with $\beta = (-1)^k$ is termed a \emph{cylinder cycle
matrix} ($k$-CCM), while one with $\beta = (-1)^{k+1}$ is termed a
\emph{M\"obius cycle matrix} ($k$-MCM). We will refer to the form
shown in~\eqref{mtx:cycle_matrix} as the \emph{normal form} cycle
matrix.
\end{definition}

As an example, consider a triangulation $K$ of a M\"{o}bius strip with
$k\geq 5$ triangles shown in Figure \ref{fig:kmobius}. Let $K_0$ be
the complex for the boundary of the M\"obius strip. In the figure,
$K_0$ consists of the horizontal edges. Then the relative boundary
matrix $\left[\rltvbndry{2}{K}{K_0}\right]$ of the M\"{o}bius strip
$K$ modulo its edge $K_0$ is a $k$-MCM. The orientations of triangle
$\tau_{k-1}$ and that of the terminal edge $e_0$ are opposite if $k$
is even, but the orientations agree if $k$ is odd, giving $\beta =
(-1)^{k+1}$. Note that in Section~\ref{subsec:nonorientable}, the
submatrix $S$ of the boundary matrix of the M\"obius strip was such a
relative boundary matrix and it is an example of a 6-MCM. Another
example in that section was the 5-MCM obtained from the boundary
matrix of the projective plane.

Similarly, we observe a $k$-CCM as the relative boundary-$2$ matrix of
a cylinder triangulated with $k$ triangles, modulo the cylinder's
edges. Reversing the orientation of an edge or a triangle results in
scaling the corresponding row or column, respectively, of the boundary
matrix by $-1$. These examples motivate the names ``M\"obius'' and
``cylinder'' matrices -- a cycle matrix can be interpreted as the
relative boundary matrix of a M\"obius or cylinder complex. So, we
have the following result.
\begin{lemma} \label{lem:MCMcomplex} 
  Let $K$ be a finite simplicial complex of dimension greater than
  $p$. The boundary matrix $[\partial_{p+1}]$ has no $k$-{\em MCM} for
  any $k\geq 2$ if and only if $K$ does not have any
  $(p+1)$-dimensional M\"{o}bius complex as a subcomplex.
\end{lemma}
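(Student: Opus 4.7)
The plan is to prove both implications by setting up an explicit correspondence between $k$-MCM submatrices of $[\partial_{p+1}]$ and $(p+1)$-dimensional Möbius subcomplexes of $K$. The guiding observation, already foreshadowed in the discussion preceding the lemma, is that a cycle matrix is (up to row/column permutations and $\pm 1$ scalings) the relative boundary matrix of a cycle complex modulo its boundary, and the parameter $\beta$ encodes orientability: CCM for orientable (cylinder) and MCM for non-orientable (Möbius).

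For the $(\Leftarrow)$ direction, I would start from a Möbius subcomplex $M \subseteq K$ with cyclically ordered $(p+1)$-simplices $\sigma_0,\ldots,\sigma_{k-1}$ and consecutive shared $p$-faces $\tau_i = \sigma_i \cap \sigma_{i+1 \bmod k}$. The $k\times k$ submatrix $S$ of $[\partial_{p+1}]$ on rows $\tau_0,\ldots,\tau_{k-1}$ and columns $\sigma_0,\ldots,\sigma_{k-1}$ has exactly two nonzero $\pm 1$ entries in every row and every column, arranged in cyclic fashion. After permutation and row/column sign flips (which correspond to reorienting the $\tau_i$ and $\sigma_j$), all entries can be normalized to $+1$ except for a single residual entry $\beta$; because each row and column contains exactly two nonzeros, each sign flip enters an even number of times, so the product of all $2k$ entries of $S$ is a gauge invariant equal to $\beta$. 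A standard parity calculation — essentially the combinatorial formulation of manifold orientability via consistent top-dimensional orientations — shows $\beta = (-1)^{k+1}$ iff $M$ admits no consistent orientation. Since $M$ is Möbius, $S$ is a $k$-MCM.

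For the $(\Rightarrow)$ direction I would argue by strong induction on $k$. Let $S$ be a $k$-MCM submatrix with $k$ minimal, with columns $\sigma_0,\ldots,\sigma_{k-1}$ and rows $\tau_0,\ldots,\tau_{k-1}$ arranged in the normal form of Definition~\ref{def:CM}. The normal form forces each $\tau_i$ to be a face of exactly $\sigma_i$ and $\sigma_{i+1 \bmod k}$ among the listed simplices, and each $\sigma_j$ to have exactly $\tau_{j-1 \bmod k}$ and $\tau_j$ among the listed faces. Suppose for contradiction that some pair $\sigma_a, \sigma_b$ with $b-a \not\equiv \pm 1 \pmod k$ shares an extra $p$-face $\tau^*$. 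Splitting the $k$-cycle at the chord $\tau^*$ produces two shorter cyclic arrangements $\sigma_a,\sigma_{a+1},\ldots,\sigma_b$ and $\sigma_b,\sigma_{b+1},\ldots,\sigma_a$, each closed via $\tau^*$, of lengths $k_1, k_2$ with $k_1+k_2 = k+2$ and $3\leq k_i \leq k-1$. Each yields a cycle-matrix submatrix $S_i$ of $[\partial_{p+1}]$, and their gauge-invariant products satisfy
\[
  P(S_1)\cdot P(S_2) \;=\; P(S),
\]
since every entry of $S$ belongs to exactly one $S_i$ while the two new $\tau^*$-entries appear in both $S_i$ and so contribute a perfect square. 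Combined with the identities $P(k'\text{-MCM}) = (-1)^{k'+1}$, $P(k'\text{-CCM}) = (-1)^{k'}$, and $P(S) = (-1)^{k+1}$, a parity check forces exactly one of $S_1, S_2$ to be an MCM of size $k_i < k$, contradicting the minimality of $k$. So no extra shared face exists, the $\sigma$'s and their faces assemble into a $(p+1)$-dimensional cycle complex $M \subseteq K$, and its MCM-type certifies that $M$ is non-orientable, i.e., a Möbius complex.

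The main obstacle is establishing the multiplicativity $P(S_1)P(S_2) = P(S)$: geometrically it is just the statement that orientability is additive under gluing along a shared codimension-one face, but algebraically it demands careful sign bookkeeping for the entries common to the three matrices. The remaining pieces — reading cyclic adjacency off the normal form, matching $\beta = (-1)^{k+1}$ with non-orientability of the glued manifold, and dismissing $k=2$ as vacuous since two $(p+1)$-simplices sharing two $p$-faces must coincide — are routine.
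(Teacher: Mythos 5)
The paper actually states this lemma without a formal proof, treating it as evident from the preceding discussion identifying the relative boundary matrix of a Möbius strip with an MCM; so your writeup is already more detailed than what appears in the text. Your $(\Leftarrow)$ direction is correct, and the gauge-invariant product (the product of the $2k$ nonzero entries, invariant under row/column permutation and sign scaling) is a clean way to see that a Möbius cycle complex yields an MCM: the product equals $\beta$ in normal form, and equals $(-1)^{k+1}$ precisely when the top-dimensional simplices cannot be oriented consistently. You also correctly observe that a $\tau_i$ cannot be a face of a third $\sigma_j$ among the columns of $S$, since a row of a cycle matrix has exactly two nonzeros, and that consecutive $(p+1)$-simplices cannot share two $p$-faces.

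The gap is in the $(\Rightarrow)$ direction, at the sentence ``Each yields a cycle-matrix submatrix $S_i$ of $[\partial_{p+1}]$.'' This is not automatic and can in fact fail. The row of $S_1$ indexed by the new face $\tau^*$ has nonzeros at every column $\sigma_j$ (with $a\le j\le b$) of which $\tau^*$ is a face; nothing you have said rules out $\tau^*$ being a face of some $\sigma_j$ with $a<j<b$, in which case that row has three or more nonzeros and $S_1$ is not a cycle matrix (and the same could happen to $S_2$ on the complementary arc). Your multiplicativity identity $P(S_1)P(S_2)=P(S)$ still holds as a product of entries, but the identification $P(S_i)=\beta_i$ and the CCM/MCM dichotomy only make sense when $S_i$ actually is a cycle matrix, so the parity contradiction as written is not justified. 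One way to repair it: choose, among all chords, one whose arc span $m=b-a$ is minimal; then $\tau^*$ cannot be incident to any $\sigma_j$ with $a<j<b$ (else you would have a chord of strictly smaller span, or two $(p+1)$-simplices sharing two $p$-faces), so $S_1$ is genuinely a cycle matrix; if $S_1$ is an MCM you are done, and if it is a CCM you still need to extract a smaller MCM from the other side, where $S_2$ may not be a cycle matrix. A cleaner fix is to phrase the induction on cycles in the signed bipartite incidence graph of $(p+1)$- and $p$-simplices rather than on cycle-matrix submatrices: a shortest odd cycle is automatically chordless (a chord splits it, and exactly one of the two pieces is odd by your parity computation), so the minimal odd cycle is already an MCM whose simplices form a cycle complex, and the existence of any odd cycle in either split piece suffices for the contradiction without insisting that the piece itself be a cycle matrix.
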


\begin{figure}[h] 
 \centering
 \includegraphics[scale=0.85]
 {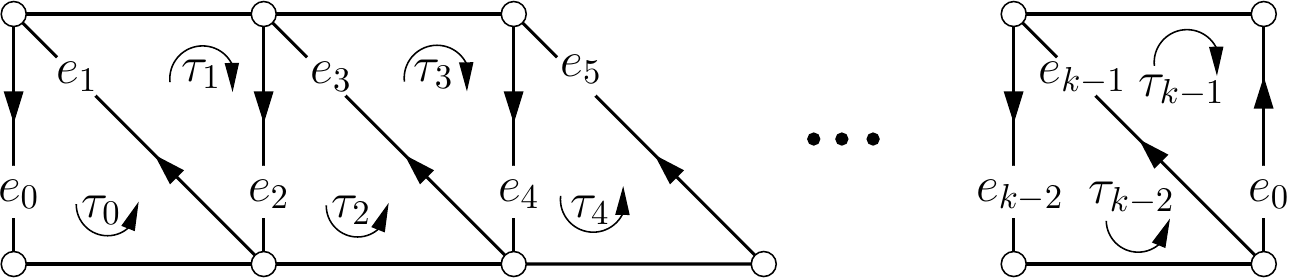}
\caption{Triangulation of a M\"{o}bius strip with $k$ triangles.}
 \label{fig:kmobius}
\end{figure}

It is now easy to see that the absence of M\"obius complexes is a
necessary condition for total unimodularity. We show that this
condition is also sufficient for $2$- or lower dimensional
complexes. We first need the simple result that an MCM is not totally
unimodular.

\begin{lemma}\label{lem:CMdtrmnt}
  Let $C$ be a $k$-CM. Then $\det C = 0$ if it is a $k$-CCM, and
  $\abs{\det C} = 2$ if it is a $k$-MCM.
\end{lemma}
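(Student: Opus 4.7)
The plan is to compute $\det C$ directly by cofactor expansion along the first row of the normal form in \eqref{mtx:cycle_matrix}. Row and column permutations, and row/column scalings by $-1$, affect the determinant only by a sign, so it suffices to handle the normal form: the conclusion ($\det C = 0$ versus $\abs{\det C} = 2$) is preserved under these operations.

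In the normal form, the first row has exactly two nonzero entries, namely $C_{1,1}=1$ and $C_{1,k}=\beta$. Expanding along this row gives
\[
  \det C \;=\; (-1)^{1+1}\,\det M_{11} \;+\; \beta\,(-1)^{1+k}\,\det M_{1k},
\]
where $M_{11}$ and $M_{1k}$ are the corresponding $(k-1)\times(k-1)$ minors. I would then observe that $M_{11}$, obtained by deleting the first row and first column, is lower triangular with $1$'s on the diagonal (the $1$'s that formerly sat on the subdiagonal of $C$ slide onto the diagonal of $M_{11}$), so $\det M_{11}=1$. Similarly, $M_{1k}$, obtained by deleting the first row and last column, is upper triangular with $1$'s on the diagonal (the diagonal $1$'s of $C$ in rows $2,\dots,k$ become the diagonal of $M_{1k}$), so $\det M_{1k}=1$. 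Hence
\[
  \det C \;=\; 1 + (-1)^{k+1}\beta.
\]

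It then remains only to plug in the two cases. If $C$ is a $k$-CCM then $\beta=(-1)^k$, so $(-1)^{k+1}\beta=(-1)^{2k+1}=-1$ and $\det C = 0$. If $C$ is a $k$-MCM then $\beta=(-1)^{k+1}$, so $(-1)^{k+1}\beta=1$ and $\det C=2$, giving $\abs{\det C}=2$.

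There is no real obstacle here; the computation is a direct cofactor expansion. The only point requiring a word of care is the reduction to the normal form: I would state once, at the start of the argument, that row/column permutations and sign-rescalings change $\det C$ by $\pm 1$ and therefore preserve both the vanishing of the determinant in the CCM case and its absolute value in the MCM case, so we lose no generality by assuming $C$ is already in the form \eqref{mtx:cycle_matrix}.
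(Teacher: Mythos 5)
Your proof is correct and follows essentially the same approach as the paper's: reduce to the normal form by permutations and sign-scalings (which preserve $\abs{\det C}$), expand along the first row to obtain $\det C' = 1 + (-1)^{k+1}\beta$, and substitute the two possible values of $\beta$. The paper's proof is more terse (it does not spell out that $M_{11}$ and $M_{1k}$ are triangular with unit diagonal), but the argument is identical in substance.
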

\begin{proof}

   The matrix $C$ can always be brought into the normal form with a series of
    row and column exchanges and scalings by $-1$. 
   Note that these operations preserve the value of
  $\abs{\det C}$. Now assume that $C$ has been brought into the normal
  form and call that matrix $C'$. We expand along the first row of
  $C'$ to get $\det C' = 1 + (-1)^{k+1} \beta$, and the claim follows.
\end{proof}

\begin{theorem}
\label{thm:TUiff_noMCMs_2cplx}
For $p \leq 1$, $[\boundary_{p+1}]$ is totally unimodular if and only
if the simplicial complex $K$ has no M\"obius subcomplex of dimension
$p+1$.
\end{theorem}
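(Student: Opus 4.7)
The plan is to prove the two directions separately; the forward direction is a one-line consequence of the lemmas just established, while the reverse direction for $p=1$ carries the content. For the forward direction, if $K$ contains a $(p+1)$-dimensional M\"obius subcomplex then Lemma~\ref{lem:MCMcomplex} gives a $k$-MCM submatrix of $[\boundary_{p+1}]$ and Lemma~\ref{lem:CMdtrmnt} shows this submatrix has determinant of absolute value $2$, so $[\boundary_{p+1}]$ is not totally unimodular. For the reverse direction when $p = 0$, every $1$-dimensional cycle complex is an orientable cycle graph (it triangulates a circle), so no $1$-dimensional M\"obius complex exists at all; separately, $[\boundary_1]$ is the signed vertex-edge incidence matrix of a graph with exactly one $+1$ and one $-1$ per column and is therefore totally unimodular by the Heller-Tompkins criterion already invoked in Section~\ref{sec:manifolds}. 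Thus the biconditional is trivial for $p=0$.

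The real content lies in the reverse direction for $p=1$, which I would prove by contrapositive. Assume $[\boundary_2]$ is not totally unimodular, and pick a square submatrix $S$ of $[\boundary_2]$ of minimum order $k$ satisfying $\abs{\det S} \ge 2$. Minimality forces $\abs{\det S} = 2$, and Laplace expansion along any row or column having $0$ or $1$ nonzero entries would exhibit a smaller non-TU minor, so every row and every column of $S$ has at least two nonzeros. The goal is to recognize $S$, after a suitable permutation of rows and columns and $\pm 1$ scalings (which preserve both total unimodularity and the combinatorial interpretation), as the normal form~\eqref{mtx:cycle_matrix} of a cycle matrix; Lemma~\ref{lem:CMdtrmnt} will then force $\beta = (-1)^{k+1}$ since $\abs{\det S} = 2 \ne 0$, so $S$ is a $k$-MCM, and Lemma~\ref{lem:MCMcomplex} produces the required $2$-dimensional M\"obius subcomplex of $K$.

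The main obstacle is to rule out columns of $S$ with three nonzero entries. Here I would appeal to Camion's characterization of total unimodularity: a $\{-1,0,1\}$-matrix fails to be totally unimodular iff it contains a square submatrix whose row sums and column sums are all even and whose total entry sum is $\equiv 2 \pmod 4$. Since each column of $[\boundary_2]$ has exactly three $\pm 1$ entries (an odd number), column evenness in such a Camion submatrix forces every selected column to contain either zero or two of those three nonzeros. Replacing $S$ if necessary by a minimum-order Camion submatrix and discarding any all-zero rows and columns yields a submatrix with exactly two nonzeros per column; a count of total nonzeros then forces exactly two nonzeros per row as well. At that point $S$ is the signed incidence matrix of a $2$-regular multigraph, which by inclusion-minimality is a single cycle --- precisely the combinatorial pattern of the cycle-matrix normal form. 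The triangles indexed by its columns together with the shared edges indexed by its rows assemble into a $2$-dimensional cycle complex in $K$, and the condition $\abs{\det S} = 2 \ne 0$ together with Lemma~\ref{lem:CMdtrmnt} singles out the M\"obius case, completing the contrapositive.
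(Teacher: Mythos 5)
Your forward direction and the $p=0$ observations mirror the paper's. For the substantive reverse direction at $p=1$, you and the paper take genuinely different routes. The paper invokes Truemper's classification of minimally non-totally-unimodular $\{0,\pm1\}$-matrices into two classes $\mathscr{W}_1$ and $\mathscr{W}_7$: $\mathscr{W}_1$ consists precisely of the M\"obius cycle matrices, and $\mathscr{W}_7$ is excluded because each of its members has a column with at least four nonzeros, whereas every column of $[\boundary_2]$ has exactly three. You instead work from Camion's Eulerian-submatrix characterization together with the parity fact peculiar to $[\boundary_2]$: a column of three $\pm 1$ entries has even column sum only when zero or two of them are selected, and minimality then forces the offending submatrix to have exactly two nonzeros in each row and column with single-cycle support, i.e.\ the cycle-matrix normal form, with the nonzero determinant excluding the CCM case. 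Both proofs hinge on the same structural fact (three nonzeros per column of $[\boundary_2]$); yours is more self-contained since it constructs the cycle matrix rather than quoting Truemper's identification of $\mathscr{W}_1$, at the cost of invoking the Camion--Gomory lemma that a minimally non-TU $\{0,\pm1\}$-matrix is Eulerian with $\abs{\det}=2$ and entry sum $\equiv 2 \pmod 4$. One small repair to your write-up: rather than replacing $S$ by a minimum-order Camion submatrix and then discarding all-zero rows and columns (which can spoil squareness and the Eulerian condition), take $S$ to be a minimally non-TU submatrix of $[\boundary_2]$ directly; since $\det S = \pm 2 \ne 0$ it has no zero rows or columns, and the Camion--Gomory lemma supplies the Eulerian and mod-4 facts you need, after which your parity and counting argument goes through as intended.
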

\begin{proof} 
  \noindent ($\Rightarrow$) 
  If there is a M\"obius subcomplex of dimension $p+1$ in $K$, then by
  Lemma \ref{lem:MCMcomplex} an MCM appears as a submatrix of
  $[\boundary_{p+1}]$. That MCM is a certificate for
  $[\boundary_{p+1}]$ not being totally unimodular since its
  determinant has magnitude 2 by Lemma~\ref{lem:CMdtrmnt}.
 
  \medskip

  \noindent ($\Leftarrow$) 
  Let $K$ have no M\"obius subcomplexes of dimension $p+1$. Then by
  Lemma \ref{lem:MCMcomplex}, there are no MCMs as submatrices of
  $[\boundary_{p+1}]$.  Truemper~\cite[Theorem 28.3]{Truemper1992VII}
  has characterized all minimally non-totally unimodular matrices,
  i.e., matrices that are not totally unimodular, but their every
  proper submatrix is totally unimodular. These matrices belong to two
  classes, which Truemper denotes as $\mathscr{W}_1$ and
  $\mathscr{W}_7$. MCMs constitute the first class $\mathscr{W}_1$. A
  minimally non-totally unimodular matrix $W$ is in $\mathscr{W}_7$ if
  and only if $W$ has a row and a column containing at least four
  nonzeros each \cite[Cor.~28.5]{Truemper1992VII}. Since $p \leq 1$,
  no column of $[\boundary_{p+1}]$ can have four or more nonzeros, and
  hence no matrix from the class $\mathscr{W}_7$ can appear as a
  submatrix. Hence $[\boundary_{p+1}]$ is totally unimodular if $K$
  has no $(p+1)$-dimensional M\"obius subcomplexes.
\end{proof}
The necessary condition in Theorem~\ref{thm:TUiff_noMCMs_2cplx}
extends beyond $2$-complexes as Remark~\ref{extend} indicates. 
However, we cannot extend  the sufficiency
condition; Remark~\ref{not-extend} presents a counterexample.
\begin{remark}
  Note that the absence of M\"obius subcomplexes is a necessary
  condition for $[\boundary_{p+1}]$ to be totally unimodular for {\em
    all} $p$.  More precisely, if the simplicial complex $K$ of
  dimension greater than $p$ has a M\"obius subcomplex of dimension
  $p+1$ then $[\boundary_{p+1}]$ is not totally unimodular. By Lemma
  \ref{lem:MCMcomplex}, an MCM appears as a submatrix of
  $[\boundary_{p+1}]$ in this case. Its determinant has magnitude 2 by
  Lemma~\ref{lem:CMdtrmnt}, trivially certifying that
  $[\boundary_{p+1}]$ is not totally unimodular.
\label{extend}
\end{remark}

\begin{remark}
  The characterization in Theorem \ref{thm:TUiff_noMCMs_2cplx} does
  not hold for higher values of $p$. We present a $3$-complex which
  does not have a $3$-dimensional M\"obius subcomplex, but whose
  $[\boundary_{3}]$ is not totally unimodular. Consider the simplicial
  complex consisting of the following seven tetrahedra formed from
  seven points numbered $0$--$6$: $\, (0,1,2,3),\, (0,1,2,4),\,
  (0,1,2,5),\, (0,1,2,6),\, (0,1,3,4),\, (0,2,3,5),\, (1,2,3,6)$. It
  can be verified that the $19 \times 7$ boundary matrix
  $[\boundary_{3}]$ of this simplicial complex has the $7 \times 7$
  matrix
\[ 
W = \begin{bmatrix}
  -1&-1&-1&-1&\phantom{-}0&\phantom{-}0&\phantom{-}0\\
  \phantom{-}1&\phantom{-}0&\phantom{-}0&\phantom{-}0&-1&
  \phantom{-}0&\phantom{-}0\\
  -1&\phantom{-}0&\phantom{-}0&\phantom{-}0&\phantom{-}0&-1&\phantom{-}0\\
  \phantom{-}1&\phantom{-}0&\phantom{-}0&\phantom{-}0&\phantom{-}0&
  \phantom{-}0&-1\\
  \phantom{-}0&\phantom{-}1&\phantom{-}0&\phantom{-}0&\phantom{-}1&
  \phantom{-}0&\phantom{-}0\\
  \phantom{-}0&\phantom{-}0&-1&\phantom{-}0&\phantom{-}0&\phantom{-}1&
  \phantom{-}0\\
  \phantom{-}0&\phantom{-}0&\phantom{-}0&\phantom{-}1&\phantom{-}0&
  \phantom{-}0&\phantom{-}1\\
\end{bmatrix}
\]
as a submatrix where $\det(W)=-2$, certifying that $[\boundary_{3}]$
is not totally unimodular. In fact, $W$ is the {\em only} submatrix of
$[\boundary_{3}]$ which is not totally unimodular, and it belongs to
the class $\mathscr{W}_7$ of minimally non-totally unimodular
matrices. 
\label{not-extend}
\end{remark}
 
\section{Experimental Results}

We have implemented our linear programming method to solve the optimal
homologous chain problem. In Figure~\ref{fig:trs_sldannls} we show
some results of preliminary experiments.

The top row in Figure~\ref{fig:trs_sldannls} shows the computation of
optimal homologous 1-chains on the simplicial complex representation
of a torus. The longer chain in each torus figure is the initial chain
and the tighter shorter chain is the optimal homologous chain computed
by our algorithm. The bottom row shows the result of the computation
of an optimal 2-chain on a simplicial complex of dimension 3. The
complex is the tetrahedral triangulation of a solid annulus -- a solid
ball from which a smaller ball has been removed. Two cut-away views
are shown. The outer surface of the sphere is the initial chain and
the inner surface is computed as the optimal 2-chain.

In these experiments we used the linear program~\eqref{opt:LP}. The
initial chains used had values in $\{-1,0,1\}$ on the simplices. In
the torus examples for instance, the initial chain was 0 everywhere
except along the initial curve shown. The curve was given an arbitrary
orientation and the values of the chain on the edges forming the curve
were $+1$ or $-1$ depending on the edge orientation. In these
examples, the resulting optimal chains were oriented curves, with
values of $\pm 1$ on the edges along the curve. This is by no means
guaranteed theoretically, as seen in the hour glass example in
Remark~\ref{rem:hour_glass}. The only guarantee is that of
integrality. However if it is essential that the optimal chain has
values only in $\{-1,0,1\}$ then the linear program~\eqref{opt:LP_L0}
or it's Euclidean variant can be used, imposing the additional
constraint on the values of the optimal solution $\xx$ as shown in
linear program~\eqref{opt:LP_L0}.

\begin{figure}[ht!]
  \centering
  \includegraphics[scale=0.25, trim=2.75in 1.5in 2.75in 2.65in, clip]
  {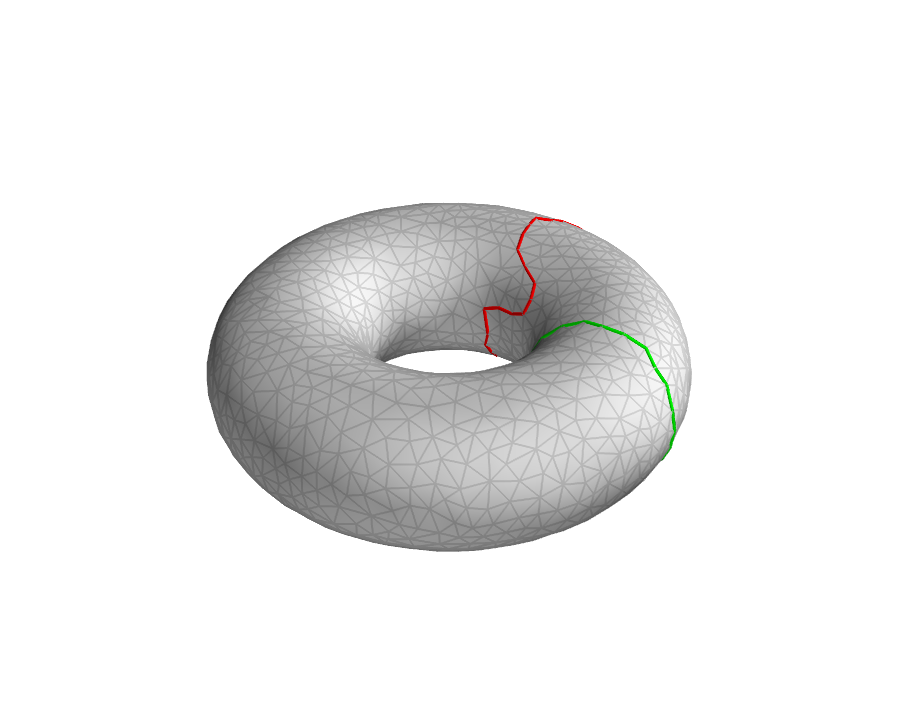}
  \includegraphics[scale=0.25, trim=2.75in 1.5in 2.75in 2.65in, clip]
  {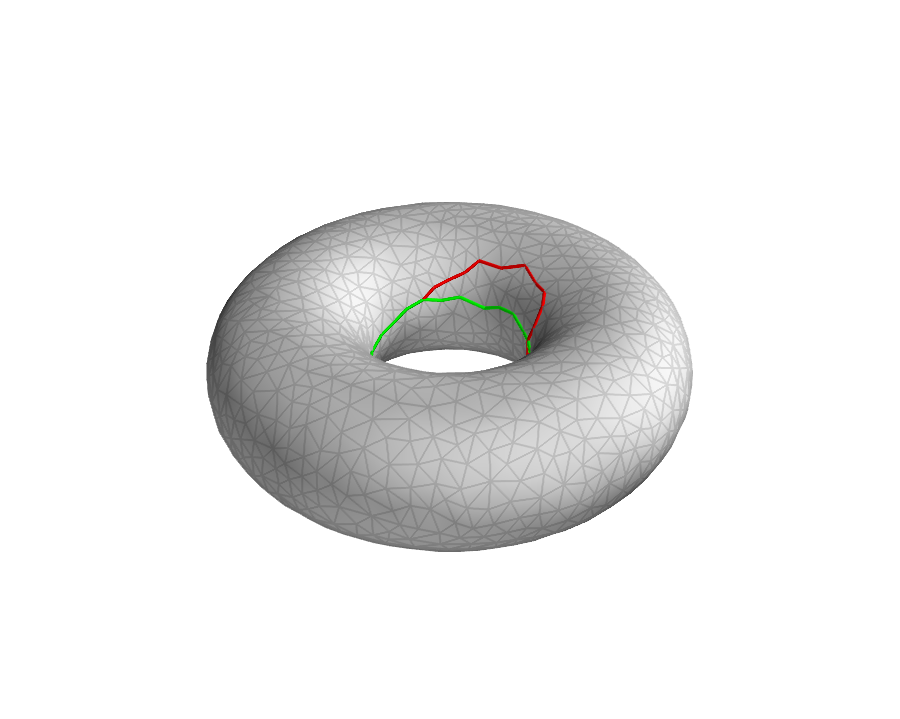}\\
 \includegraphics[trim=0in 1in 1.5in 1.5in, clip, scale=0.25]
  {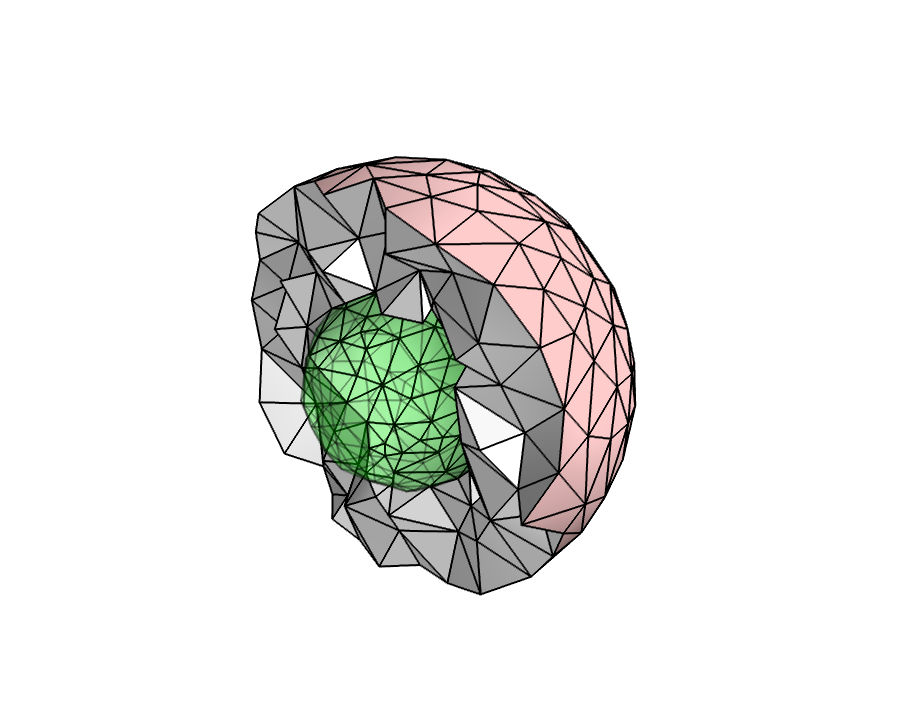}
  \includegraphics[trim=2.5in 1in 1.5in 1.5in, clip, scale=0.25]
  {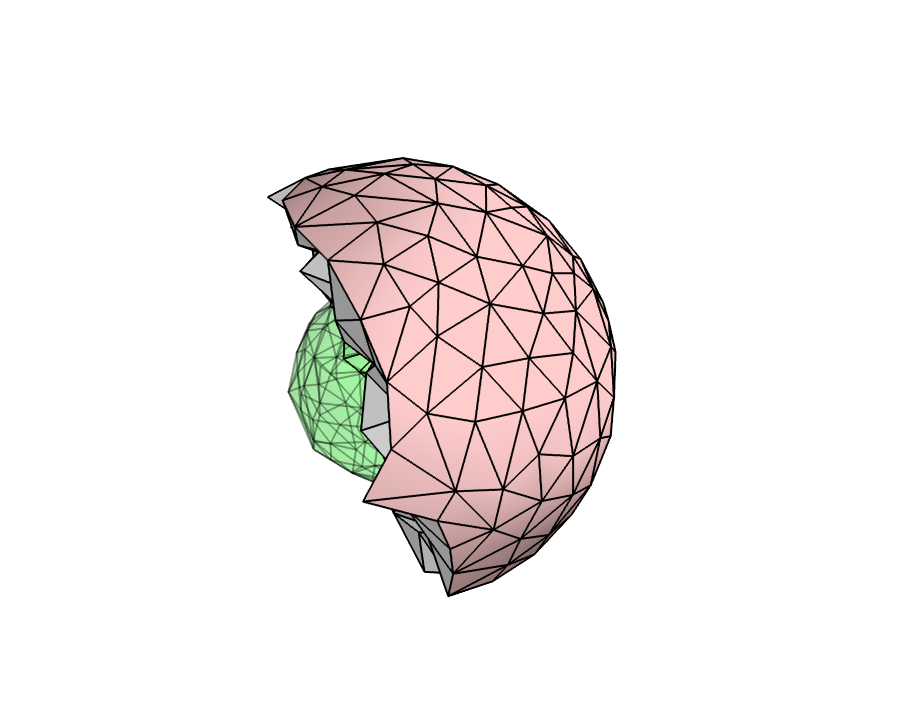}
  \caption{Some experimental results. See text for details on what is
    being computed here.}
\label{fig:trs_sldannls}
\end{figure}

\section{Discussion}

Several questions crop up from our problem formulation and results.
Instead of 1-norm $\norm{W\, \xx}_1$, we can consider minimizing
$\sum_i w_i\,x_i$. In this case, the weights appear with signs and
solutions may be unbounded. Nevertheless, our result in
Theorem~\ref{thm:TU_plynmltm} remains valid. Of course, in this case
we do not need to introduce $x_i^+$ and $x_i^-$ since the objective
function uses $x_i$ rather than $\abs{x_i}$. We may introduce more
generalization in the OHCP formulation by considering a general matrix
$W$ instead of requiring it to be diagonal and then asking for
minimizing $\norm{W\,\xx}_1$. We do not know if the corresponding
optimization problem can be solved by a linear program. Can this
optimization problem be solved in polynomial time for some interesting
classes of complexes?

We showed that OHCP under $\Z$ coefficients can be solved by linear
programs for a large class of topological spaces that have no relative
torsion. This leaves a question for the cases when there is relative
torsion. Is the problem NP-hard under such constraint? Taking the cue
from our results, one can also ask the following question. Even though
we know that the problem is NP-hard under $\Z_2$ coefficients, is it
true that OHCP in this case is polynomial time solvable at least for
simplicial complexes that have no relative torsions (considered under
$\Z$)? The answer is negative since OHCP for surfaces in
$\mathbb{R}^3$ is NP-hard under $\Z_2$ coefficients~\cite{ChErNa2009}
even though they are known to be torsion-free.

Even if the input complex has relative torsion, the constraint
polyhedron of the linear program may still have vertices with integer
coordinates. In that case, the linear program may still give an
integer solution for chains that steer the optimization path toward
such a vertex. In fact, we have observed experimentally that, for some
$2$-complexes with relative torsion, the linear program finds the
integer solution for some input chains. It would be nice to
characterize the class of chains for which the linear program still
provides a solution even if the input complex has relative torsion.

A related question that has also been investigated recently is the
problem of computing an optimal homology basis from a given complex.
Again, positive results have been found for low dimensional cases such
as surfaces~\cite{ErWh2005} and one dimensional homology for
simplicial complexes~\cite{ChFr2010,DSW2010}. The result of Chen and
Freedman~\cite{ChFr2010a} implies that even this problem is NP-hard
for high dimensional cycles under $\Z_2$. What about $\Z$? As in OHCP,
would we have any luck here?

\vspace{0.1in}
\noindent {\bf Acknowledgments.} We acknowledge the helpful
discussions with Dan Burghelea from OSU mathematics department and
thank Steven Gortler for pointing out the result in John Sullivan's
thesis. Tamal Dey acknowledges the support of NSF grants CCF-0830467
and CCF-0915996. The research of Anil Hirani is funded by NSF CAREER
Award, Grant No. DMS-0645604. We acknowledge the opportunity provided
by NSF via a New Directions Short Course at the Institute for
Mathematics and its Applications (IMA) which initiated the present
collaboration of the authors.

\bibliographystyle{acmurldoi} 
\bibliography{homology}

\pagebreak[4]

\section*{Appendix}
\subsection*{Boundary matrices for non-orientable surfaces}

The boundary matrices $[\boundary_2]$ for the M\"obius strip and
projective plane triangulations shown in Figure~\ref{fig:nonorntbl}
are given below. The row numbers are edge numbers and the column
numbers are triangle numbers which are displayed in
Figure~\ref{fig:nonorntbl}. 

\begin{center}
  $[\boundary_2]$ for M\"obius strip :
\end{center}
\begin{equation}
\label{mtx:moebius_b2}
\begin{bmatrix}
&0: &1: &2: &3: &4: &5:\\
\\
0: &\phantom{-}1&\phantom{-}0&\phantom{-}0&\phantom{-}0&
\phantom{-}0&\phantom{-}1\\
1: &\phantom{-}0&\phantom{-}0&\phantom{-}0&\phantom{-}0&-1&
\phantom{-}0\\
2:&-1&\phantom{-}1&\phantom{-}0&\phantom{-}0&\phantom{-}0&
\phantom{-}0\\
3:&\phantom{-}0&\phantom{-}0&\phantom{-}0&\phantom{-}0&
\phantom{-}1&-1\\
4:&\phantom{-}0&-1&\phantom{-}0&\phantom{-}0&\phantom{-}0&
\phantom{-}0\\
5:&\phantom{-}1&\phantom{-}0&\phantom{-}0&\phantom{-}0&
\phantom{-}0&\phantom{-}0\\
6:&\phantom{-}0&\phantom{-}0&\phantom{-}0&\phantom{-}0&
\phantom{-}0&\phantom{-}1\\
7:&\phantom{-}0&\phantom{-}0&-1&\phantom{-}0&\phantom{-}0&
\phantom{-}0\\
8:&\phantom{-}0&\phantom{-}0&\phantom{-}0&\phantom{-}1&-1&
\phantom{-}0\\
9:&\phantom{-}0&\phantom{-}0&\phantom{-}1&-1&\phantom{-}0&
\phantom{-}0\\
10:&\phantom{-}0&\phantom{-}1&-1&\phantom{-}0&\phantom{-}0&
\phantom{-}0\\
11:&\phantom{-}0&\phantom{-}0&\phantom{-}0&\phantom{-}1&
\phantom{-}0&\phantom{-}0\\
\end{bmatrix}
\end{equation}

\bigskip
\bigskip
\begin{center}
$[\boundary_2]$ for projective plane :
\end{center}
\setcounter{MaxMatrixCols}{12}
\begin{equation}
\label{mtx:prjctvpln_b2}
\begin{bmatrix}
&0: &1: &2: &3: &4: &5: &6: &7: &8: &9:\\
\\
0:&-1&\phantom{-}0&\phantom{-}0&\phantom{-}0&\phantom{-}0&-1&
\phantom{-}0& \phantom{-}0&\phantom{-}0&\phantom{-}0\\
1: &\phantom{-}0&\phantom{-}1&\phantom{-}1&\phantom{-}0&
\phantom{-}0&\phantom{-}0&\phantom{-}0&\phantom{-}0&\phantom{-}0&
\phantom{-}0\\
2:&\phantom{-}1&-1&\phantom{-}0&\phantom{-}0&\phantom{-}0&
\phantom{-}0& \phantom{-}0&\phantom{-}0&\phantom{-}0&\phantom{-}0\\
3:&\phantom{-}0&\phantom{-}0&-1&\phantom{-}0&\phantom{-}0&
\phantom{-}0& \phantom{-}0&\phantom{-}1&\phantom{-}0&\phantom{-}0\\
4:&\phantom{-}0&\phantom{-}0&\phantom{-}0&\phantom{-}0&
\phantom{-}0&\phantom{-}1&\phantom{-}0&-1&\phantom{-}0&
\phantom{-}0\\
5:&\phantom{-}0&\phantom{-}0&\phantom{-}0&\phantom{-}0&-1&
\phantom{-}0&-1& \phantom{-}0&\phantom{-}0&\phantom{-}0\\
6:&-1&\phantom{-}0&\phantom{-}0&\phantom{-}0&\phantom{-}0&
\phantom{-}0& \phantom{-}0&\phantom{-}0&\phantom{-}1&\phantom{-}0\\
7:&\phantom{-}0&\phantom{-}0&\phantom{-}0&\phantom{-}0&
\phantom{-}1&\phantom{-}0&\phantom{-}0&\phantom{-}0&-1&
\phantom{-}0\\
8:&\phantom{-}0&\phantom{-}0&\phantom{-}0&\phantom{-}0&
\phantom{-}0&-1&\phantom{-}1&\phantom{-}0&\phantom{-}0&
\phantom{-}0\\
9:&\phantom{-}0&\phantom{-}1&\phantom{-}0&\phantom{-}0&
\phantom{-}0&\phantom{-}0&\phantom{-}0&\phantom{-}0&
\phantom{-}0&-1\\
10:&\phantom{-}0&\phantom{-}0&\phantom{-}1&\phantom{-}0&-1&
\phantom{-}0& \phantom{-}0&\phantom{-}0&\phantom{-}0&
\phantom{-}0\\
11:&\phantom{-}0&\phantom{-}0&\phantom{-}0&\phantom{-}0&
\phantom{-}0& \phantom{-}0&-1&\phantom{-}0&\phantom{-}0&
\phantom{-}1\\
12:&\phantom{-}0&\phantom{-}0&\phantom{-}0&-1&\phantom{-}0&
\phantom{-}0&\phantom{-}0&\phantom{-}0&\phantom{-}1&\phantom{-}0\\
13:&\phantom{-}0&\phantom{-}0&\phantom{-}0&\phantom{-}1&
\phantom{-}0&\phantom{-}0&\phantom{-}0&\phantom{-}0&
\phantom{-}0&-1\\
14:&\phantom{-}0&\phantom{-}0&\phantom{-}0&-1&\phantom{-}0&
\phantom{-}0&\phantom{-}0&\phantom{-}1&\phantom{-}0&\phantom{-}0\\
\end{bmatrix}
\end{equation}

\end{document}